 \newcommand {\theoremstyle} [1] { }
 \newenvironment{proof}{{\noindent\it\underline{Proof}}:}{\hfill$\Box$}
 \newtheorem{thm}{Theorem}[section]
 \newtheorem{prop}{Proposition}[section]
 \theoremstyle{plain}
 \newtheorem{lem}[thm]{Lemma} 
 \newtheorem{cor}{Corollary}[section]
 \theoremstyle{definition}
 \theoremstyle{remark}
 \newtheorem{rem}[thm]{Remark}
\def\R{\mathbb{R}}
\def\N{\mathbb{N}}
\def \ee{\varepsilon}
\begin{document}

\author[1]{P. Amster}
\author[1]{M. P. Kuna}
\author[2]{G. Robledo}
\affil[1]{Departamento de Matem\'atica, Facultad de Ciencias Exactas y Naturales, Universidad de Buenos Aires, Argentina and IMAS-CONICET.}
\affil[2]{Departamento de Matem\'aticas, Facultad de Ciencias,  Universidad de Chile, Chile.}

\title{Multiple solutions for periodic 
perturbations of a delayed autonomous system near an
equilibrium }
\date{}
\maketitle

\begin{abstract}
    
Small non-autonomous perturbations 
around an equilibrium of a nonlinear delayed system are 
studied. 
Under appropriate assumptions, it is shown that the number of 
$T$-periodic solutions lying inside a bounded domain $\Omega\subset \R^N$ is, generically,
at least $|\chi \pm 1|+1$, 
where $\chi$ denotes the Euler characteristic of $\Omega$. 
Moreover, some connections between the associated fixed point operator and the Poincar\'e operator are explored.

\end{abstract}

\section{Introduction}
Let 
$\Omega\subset \R^N$ be a bounded domain with 
smooth boundary. An elementary  result from the theory of ODEs 
establishes that if a smooth function $G:\overline\Omega\to \R^N$
is inwardly pointing over $\partial\Omega$, that is 
\begin{equation}
\label{hart-weak}
\langle G(x),\nu(x)\rangle <0 \qquad x\in \partial\Omega,
\end{equation}
where $\nu(x)$ denotes the outer
normal at $x$, then the solutions of the autonomous system of ordinary differential equations 
$$u'(t)=G(u(t))$$ with initial data $u(0)=u_0\in \overline \Omega$ are defined and remain inside $\Omega$ for all $t>0$. 

{Now, let us denote the space of $T$--periodic continuous functions as
$$
C_T:=\{u\in C(\R,\R^N):u(t+T)=u(t)\}
$$
and, for given $p\in C_{T}$, consider
the non-autonomous system
$$u'(t)=G(u(t)) + p(t).$$}
 
{If 
$\overline\Omega$ has the fixed point property, then the above system has at least one $T$-periodic orbit,  
provided that  $\|p\|_\infty$ is small.} This is a straightforward  consequence of the fact 
that the time-dependent vector field $G(x)+ p(t)$ is still inwardly pointing for all $t$; 
hence, the set $\overline \Omega$ is invariant for the associated flow and thus
the Poincar\'e operator given by $Pu_0:=u(T)$ is 
well defined for 
 $u_0\in \overline\Omega$ 
and satisfies $P(\overline\Omega)\subset \overline\Omega$.

More generally, observe that, when (\ref{hart-weak}) is assumed, the homotopy defined by $h(x,s):= sG(x) - (1-s)\nu(x)$ with $s\in [0,1]$ 
does not vanish on $\partial\Omega$; whence
$$
deg_B(G,\Omega,0) = deg_B(-\nu,\Omega,0), 
$$
where $deg_B$ stands for the Brouwer degree. Thus, it follows from \cite{hopf} 
that 
$deg_B(G,\Omega,0)=(-1)^N\chi(\Omega)$, where 
$\chi(\Omega)$ denotes the Euler characteristic of $\Omega$.

It is worthy to recall 
(see \emph{e.g.},\cite{wecken}) that if $\overline \Omega$ has the fixed point property, 
then $\chi(\Omega)$ is different from $0$. This follows easily in the present 
setting from the fact that if $\chi(\Omega)=0$ then one can 
construct  a field 
$G$ satisfying (\ref{hart-weak})  that does not vanish in $\Omega$. If $\overline\Omega$ has the fixed point property, then there exist (non-constant) $T$-periodic solutions of all periods which, in turn, implies that $G$ vanishes, a contradiction. 
Interestingly, the converse 
of the result in \cite{wecken} is not true; that is, one can easily find $\Omega$ with nonzero Euler characteristic such that $\overline \Omega$ has not the fixed point property. For such a domain, the Poincar\'e map has obviously a fixed point (because $G$ vanishes in $\Omega$). 
This yields the conclusion that a fixed point-free map in $C(\overline \Omega,\overline\Omega)$ cannot belong to the closure of the 
set of all the  
Poincar\'e maps associated to the homotopy class of $-\nu$.

Now suppose, independently of the value of $\chi(\Omega)$, that 
$G$ vanishes at some point 
$e\in \Omega$, namely, that 
$e$ is an equilibrium point of the autonomous system. 
It is well known that if $M:=DG(e)$ is nonsingular, 
then the degree of $G$ over any small neighbourhood $V$ of $e$ is well defined and coincides with $s(M)$, where 
\begin{equation}
\label{sM} 
s(M):= sgn ({\rm det}(M)).
\end{equation}
Thus, if $s(M)$ is different from 
$(-1)^N\chi(\Omega)$, then 
the excision property of the degree implies that the system has at least another equilibrium point in 
$\Omega\setminus \overline V$. Furthermore, 
it follows from Sard's lemma 
that, for almost all values $\overline p$ in a neighbourhood of $0\in \R^N$, 
the mapping  $G + \overline p$ has at least $\Gamma$ different
zeros in $\Omega$, with
\begin{equation}
\label{Gamma} 
\Gamma=\Gamma(M):=|\chi(\Omega)- (-1)^{N} s(M)| + 1.
\end{equation}

Thus, one might expect that if $p\in C(\R,\R^N)$ is $T$-periodic and $\|p\|_\infty$ is small, then the number of $T$-periodic solutions of the non-autonomous system 
is generically greater or equal to $\Gamma$. Here, `generically' should be understood in the sense of Baire category, that is, the property is valid for all $p$ (close to the origin) in the space of continuous $T$-periodic except for a meager set. 
It can be shown, indeed, that  the fixed point index of the Poincar\'e map $P$ at $e$ is equal to $(-1)^Ns(M)$ and, moreover, a homotopy argument shows that the 
degree of $P$ over $\Omega$ is equal to $\chi(\Omega)$. Details are omitted 
because the result follows from the main theorem of the present paper.

For several reasons, the situation is different for the delayed system 
\begin{equation}
\label{ec}
u'(t) = g(u(t),u(t-\tau)) 
\end{equation}
where, for simplicity, we shall assume that $g:\overline\Omega\times \overline\Omega\to \R^N$
is continuously differentiable.
In the first place observe that, due to the delay, the condition 
that the field $G(x):=g(x,x)$ is inwardly pointing does not 
necessarily avoid that solutions 
with initial data $x_0:=\phi\in C([-\tau,0],\overline\Omega)$
may eventually abandon $\overline\Omega$. 
However,
taking into account that
$$
|u(t_0-\tau)- u(t_0)|
\le \tau \max_{t\in [t_0-\tau,t_0]} |u'(t)|,$$
it follows that the flow-invariance property, 
now over the set $C([-\tau,0],\overline\Omega)$, is retrieved under the stronger assumption

\begin{equation}
\label{hart}
\langle g(x,y),\nu(x)\rangle < 0 \qquad (x,y)\in 
\mathcal A_\tau 
(\Omega)
\end{equation}
where 
$$
\mathcal A_\tau 
(\Omega):= \{ (x,y)\in \partial\Omega\times \overline\Omega: |y-x|\le \tau\|g\|_{\infty}\}. 
$$

In the second place, the previous considerations regarding the 
Poincar\'e map become less obvious, 
since the latter is now defined not over $\overline\Omega$ but 
over the metric space
$C([-\tau,0],\overline\Omega)$.
In connection with this fact, we recall that the characteristic equation for the autonomous linear delayed systems is 
transcendental (also called quasipolynomial 
equation), so there exist typically   infinitely many complex characteristic values.

\medskip

Throughout the paper, we shall assume as before that system (\ref{ec}) has an equilibrium point 
$e\in \Omega$, that is, such that  $g(e,e)=0$. 
This necessarily occurs  
when $\chi(\Omega)\neq 0$, although this latter condition shall not be imposed. 

Denote by $A,B\in \R^{N\times N}$
the 
respective matrices $D_xg(e,e)$ and $D_yg(e,e)$. Again, if $A+B$ is nonsingular and
$s(A+B)$ is different from 
$(-1)^N\chi(\Omega)$, then
the system has at least one extra  
equilibrium point in $\Omega$; 
furthermore, 
the number of equilibria in $\Omega$ is
generically greater or equal to $\Gamma$. 
This is readily verified by writing the set of all the functions 
$g\in C^1(\overline\Omega\times\overline\Omega,\R^N)$ satisfying (\ref{hart}) 
as the union of the closed sets 
$$X_n:=\left\{g\in C^1(\overline\Omega\times\overline\Omega,\R^N): 
\langle g(x,y),\nu(x)\rangle \le -\frac 1n \quad\hbox{for $(x, y)\in \mathcal A_\tau 
(\Omega)$}
\right\}$$
and noticing that 
$X_n\cap \mathcal C$
is nowhere dense, where $ \mathcal C$ denotes the set of those functions $g$ such that $0$ is a critical value of the corresponding $G$.

Our goal in this work is to extend  the preceding ideas for 
non-autonomous periodic perturbations of (\ref{ec}), namely the problem 
\begin{equation}
    \label{nonaut}
u'(t) = g(u(t),u(t-\tau)) + p(t)    
\end{equation}
with {$p\in C_{T}$}. 

As a basic hypothesis, we shall assume that 
the linearisation at the equilibrium, that is,  the system  
\begin{equation}
\label{linear}
u'(t) = Au(t)+ Bu(t-\tau)
\end{equation}
has no nontrivial $T$-periodic solutions. 
This clearly implies, in particular, the above-mentioned 
condition that $A+B$ is invertible. 
From the Floquet theory for DDEs, 
it is known that the latter condition is also 
sufficient for nearly all positive values of $T$ (\emph{i.e.}, except for at most a countable set). For the sake of completeness, this specific consequence of the Floquet theory shall be 
shown below (see Remark \ref{remark1}).

Our main result reads as follows.

\begin{thm}
\label{main}
Let the equilibrium $e$ and the matrices $A$ and $B$ be as before and 
assume that 
the linear system (\ref{linear})
has no nontrivial $T$-periodic solutions. Then:
\begin{itemize}
\item[(a)] There exists $r>0$ such that 
{for any $p\in 
C_{T}$}
with $\|p\|_\infty<r$  the non-autonomous problem
(\ref{nonaut})
has at least one $T$-periodic solution. 
\item[(b)] If moreover 
(\ref{hart}) holds and 
$
s(A+B) \neq (-1)^N\chi(\Omega)
$
with $s$ defined as in (\ref{sM}), then
(\ref{nonaut})
has at least two $T$-periodic solutions. 
\item[(c)] Furthermore, there exists
a residual set $\Sigma_r\subset C_T$ such that if
$p\in \Sigma_r\cap B_r(0)$, then the number of
$T$-periodic solutions
is at least $\Gamma(A+B)$, where $\Gamma$ is given by (\ref{Gamma}).
\end{itemize}
\end{thm}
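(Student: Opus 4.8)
The plan is to recast the search for $T$-periodic solutions of (\ref{nonaut}) as a fixed point problem for a compact operator, and then to read off the three conclusions from a local computation at $e$, a global degree computation over $\Omega$, and an excision argument combined with a Sard--Smale genericity statement.

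\textbf{Step 1 (the operator; part (a)).} Let $Qu:=\frac1T\int_0^T u(s)\,ds$ and let $K$ be the right inverse of $d/dt$ on the subspace of zero-mean functions of $C_T$. Because (\ref{linear}) has no nontrivial $T$-periodic solution, a Lyapunov--Schmidt (coincidence-degree) splitting of $u'=g(u,u(\cdot-\tau))+p$ shows that its $T$-periodic solutions are exactly the fixed points of the compact operator
\[
\mathcal N_p(u):=Qu+Q\big(g(u,u(\cdot-\tau))+p\big)+K(I-Q)\big(g(u,u(\cdot-\tau))+p\big)
\]
on $C_T$ (first extend $g$ to a bounded $C^1$ map on $\R^N\times\R^N$). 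This is the ``fixed point operator'' of the abstract; it is linked by an explicit homotopy to the Poincar\'e operator $P_p$ on $C([-\tau,0],\overline\Omega)$, $P_p(\phi):=u_T(\cdot;\phi,p)$, whose fixed points are the same $T$-periodic solutions. Using $g(e,e)=0$, $D_xg(e,e)=A$, $D_yg(e,e)=B$, one checks $\mathcal N_0(e)=e$ and that $I-D\mathcal N_0(e)$ is injective: applying $Q$ forces the constant component of $v\in\ker(I-D\mathcal N_0(e))$ into $\ker(A+B)=\{0\}$, and the remaining component to be a $T$-periodic solution of (\ref{linear}). Since $D\mathcal N_0(e)$ is compact, $I-D\mathcal N_0(e)$ is invertible, so $e$ is a nondegenerate, hence isolated, fixed point of $\mathcal N_0$. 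Part (a) follows at once: for $\|p\|_\infty$ small the fixed point persists in a small ball about $e$.

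\textbf{Step 2 (index at $e$, degree over $\Omega$).} The Leray--Schauder index $\mathrm{ind}(\mathcal N_0,e)$ equals $(-1)^\beta$, with $\beta$ the number (with multiplicity) of real eigenvalues of $D\mathcal N_0(e)$ exceeding $1$; the point is to identify this with $(-1)^N s(A+B)$. One localises, so that the problem linearises to (\ref{linear}), and reads off the answer from the monodromy/Fredholm structure of (\ref{linear}): the reduction onto the $N$-dimensional space of constants contributes the matrix $A+B$, while the bookkeeping of the real characteristic data — the same computation that for $\tau=0$ turns $\mathrm{sgn}\,\det\big(I-e^{T(A+B)}\big)$ into $(-1)^N s(A+B)$ — supplies the factor $(-1)^N$. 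For the global part assume (\ref{hart}). The estimate $|u(t_0-\tau)-u(t_0)|\le\tau\sup|u'|\le\tau(\|g\|_\infty+\|p\|_\infty)$ together with (\ref{hart}) shows that, for $\|p\|_\infty$ small, no $T$-periodic solution of (\ref{nonaut}) touches $\partial\Omega$; hence $\deg(I-\mathcal N_p,\mathcal O,0)$ over the open set $\mathcal O\subset C_T$ of functions with range in $\Omega$ is well defined. Homotoping $p$ to $0$ and the delayed argument $u(t-\tau)$ to $u(t)$ through $\sigma u(t-\tau)+(1-\sigma)u(t)$ — which keeps the distance to $u(t)$ at most $\tau\|g\|_\infty$, and hence keeps (\ref{hart}) and the a priori bound in force — reduces the computation to the autonomous inwardly pointing field $G(x):=g(x,x)$. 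There the Poincar\'e map $\Phi$ on $\overline\Omega$ is well defined and homotopic, through time-$sT$ maps, to the identity, so the Lefschetz fixed point theorem on the compact ANR $\overline\Omega$ gives $\deg(I-\Phi,\Omega,0)=\Lambda(\mathrm{id})=\chi(\Omega)$; carrying this back through the homotopies and the correspondence between $\mathcal N$ and $\Phi$ yields $\deg(I-\mathcal N_p,\mathcal O,0)=\chi(\Omega)$.

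\textbf{Step 3 (multiplicity; parts (b), (c)).} Choose $\delta>0$ so that $e$ is the only fixed point of $\mathcal N_0$ in $\overline{B_\delta(e)}$; by stability of the index, for $\|p\|_\infty$ small the index of $\mathcal N_p$ over $B_\delta(e)$ is $(-1)^N s(A+B)$. If $s(A+B)\neq(-1)^N\chi(\Omega)$, then by excision the index over $\mathcal O\setminus\overline{B_\delta(e)}$ equals $\chi(\Omega)-(-1)^N s(A+B)\neq0$, which forces a second $T$-periodic solution and proves (b). For (c), apply the Sard--Smale theorem to $(u,p)\mapsto u'-g(u,u(\cdot-\tau))-p$, whose differential is surjective (the $p$-derivative is $-\mathrm{id}$) and Fredholm of index $0$ in $u$: one obtains a residual $\Sigma_r\subset B_r(0)$ such that for $p\in\Sigma_r$ all $T$-periodic solutions of (\ref{nonaut}) are nondegenerate, hence finitely many in $\mathcal O$ and each of index $\pm1$. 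Exactly one lies in $B_\delta(e)$, with index $(-1)^N s(A+B)=\pm1$, while the rest carry total index $\chi(\Omega)-(-1)^N s(A+B)$, so there are at least $|\chi(\Omega)-(-1)^N s(A+B)|$ of them; in all, at least $|\chi(\Omega)-(-1)^N s(A+B)|+1=\Gamma(A+B)$ solutions.

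\textbf{Main obstacle.} The delicate point is the identity $\mathrm{ind}(\mathcal N_0,e)=(-1)^N s(A+B)$ for the \emph{delayed} linearisation: one cannot reach it by homotoping the delay to zero (or $B$ to $0$), since intermediate linear systems may acquire nontrivial $T$-periodic solutions and the index could jump, so it must be extracted from the characteristic/Fredholm structure of (\ref{linear}) directly — which is exactly where the standing hypothesis on (\ref{linear}) is essential. A secondary issue is to make the degree of the $C_T$ operator and that of the Poincar\'e operator genuinely coincide (the ``connection'' of the abstract) and to verify that (\ref{hart}) is preserved along the delay-killing homotopy, so that the a priori bound, and with it the global degree, is maintained throughout.
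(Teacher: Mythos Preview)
Your overall architecture---fixed point operator, local index at $e$, global degree over $\Omega$, excision, Sard--Smale---matches the paper's. But two points deserve comment.

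\medskip

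\textbf{The local index.} You correctly flag the identity $\mathrm{ind}(\mathcal N_0,e)=(-1)^N s(A+B)$ as the crux, but you do not actually prove it: the appeal to ``monodromy/Fredholm structure'' and the $\tau=0$ formula is a placeholder, not an argument. The paper resolves this by an explicit Fourier computation. Expanding $u\in C_T$ as $a_0+\sum_{k\ge 1}(\varphi_k a_k+\psi_k b_k)$, the linearised operator $I-K_L$ acts block-diagonally: on constants by (a positive multiple of) $A+B$, and on the $k$-th mode by a $2N\times 2N$ matrix $\mathcal M_k=\lambda_k^{-1}\left(\begin{smallmatrix}Y_k & X_k\\ -X_k & Y_k\end{smallmatrix}\right)$ with $X_k=A+\cos(\lambda_k\tau)B$, $Y_k=\lambda_k I+\sin(\lambda_k\tau)B$. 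The key lemma is that $\det\mathcal M_k>0$ for every $k\ge 1$: matrices of the form $\left(\begin{smallmatrix}X & -Y\\ Y & X\end{smallmatrix}\right)$ form a subring of $\R^{2N\times 2N}$ (``$\mathbb C$-linear'' matrices), and for any such matrix the map $(a,b)\mapsto(-b,a)$ preserves generalised eigenspaces, so every real eigenvalue has even algebraic multiplicity. Hence the sign of the full determinant is carried entirely by the constant block, giving $s(A+B)$ (or $(-1)^N s(A+B)$ in your normalisation). This is the missing ingredient; your monodromy sketch does not substitute for it.

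\medskip

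\textbf{The global degree.} Your route---kill the delay by the convex homotopy, pass to the ODE Poincar\'e map $\Phi$, invoke Lefschetz on $\overline\Omega$, and then transfer back to $\mathcal N$ via a relatedness principle---is more circuitous than necessary, and its last step is precisely the ``connection'' the paper treats only partially (and only for small $\tau$) in a later section. The paper's computation avoids the Poincar\'e map entirely: one homotopes $K$ linearly to $K_0 u:=\overline u-\tfrac{T}{2}\,\overline{\mathcal N u}$, checks using (\ref{hart}) that no fixed points appear on $\partial U$ along the way, observes that $K_0$ has range in the constants, and reduces the Leray--Schauder degree to the Brouwer degree of $\tfrac{T}{2}G$ on $\Omega$, which is $(-1)^N\chi(\Omega)$ by Hopf's formula. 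This is both shorter and self-contained.

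\medskip

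Your Step~3 (excision and Sard--Smale) is essentially the paper's argument; the paper adds the minor technical remark that one should work in $C^1_T$ rather than $C_T$ so that the regular-value set pulls back to a residual set of forcings $p$.
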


The next result is an immediate consequence of {Theorem \ref{main} combined with the preceding comments}.

\begin{cor}
\label{corol}

Let $e, A$ and $B$ be as before and 
assume that 
$A+B$ is invertible. 
Then for nearly all $T>0$ there 
exists $r=r(T)>0$ such that if {$p\in 
C_{T}$} with $\|p\|_\infty<r$ then the non-autonomous problem
(\ref{nonaut})
has at least one $T$-periodic solution. 
If moreover (\ref{hart}) holds and 
$
s(A+B) \neq (-1)^N\chi(\Omega), 
$
then the number of $T$-periodic solutions is 
at least $2$ and generically $\Gamma(A+B)$.

\end{cor}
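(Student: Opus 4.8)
\medskip
\noindent\emph{Proof of Corollary~\ref{corol} (sketch).}
The plan is to reduce the corollary to Theorem~\ref{main}: the only nontrivial point is to verify that, when $A+B$ is invertible, the standing hypothesis of Theorem~\ref{main} --- namely that the linear system (\ref{linear}) has no nontrivial $T$-periodic solution --- is automatically satisfied for every $T>0$ outside a countable set. Once this is done, parts (a), (b) and (c) of Theorem~\ref{main} translate directly into the three assertions of the corollary, with $r=r(T)$ the radius provided by part (a).

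First I would recall the Fourier characterisation of $T$-periodic solutions of the autonomous linear delay system. Writing a $T$-periodic candidate as $u(t)=\sum_{k\in\mathbb{Z}}c_k e^{2\pi i kt/T}$ and inserting it into (\ref{linear}), one finds that $u$ is a solution if and only if, for every $k\in\mathbb{Z}$,
\[
\left(\frac{2\pi i k}{T}\,I - A - B\,e^{-2\pi i k\tau/T}\right)c_k = 0 .
\]
Consequently a nontrivial $T$-periodic solution exists exactly when
\[
\Delta_k(T):=\det\left(\frac{2\pi i k}{T}\,I - A - B\,e^{-2\pi i k\tau/T}\right)=0
\]
for some $k\in\mathbb{Z}$. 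For $k=0$ this reduces to $\det(-(A+B))=0$, which is excluded by hypothesis; hence only the indices $k\neq 0$ can obstruct the argument.

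Next, fix $k\neq 0$ and set $s:=1/T>0$. The map $s\mapsto\det(2\pi i k s\,I - A - B\,e^{-2\pi i k\tau s})$ is the restriction to $s>0$ of an entire function of $s\in\mathbb{C}$ (a finite combination of powers of $s$ and of $e^{-2\pi i k\tau s}$), and it is not identically zero because it tends to $\det(-(A+B))\neq 0$ as $s\to 0^{+}$. Therefore its zero set is discrete, so $\{\,T>0:\Delta_k(T)=0\,\}$ is at most countable. Taking the union over all $k\in\mathbb{Z}\setminus\{0\}$ yields a countable exceptional set $E\subset(0,\infty)$ such that, for every $T\notin E$, the linear system (\ref{linear}) has no nontrivial $T$-periodic solution. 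Invoking Theorem~\ref{main} for such $T$ then gives the existence of at least one $T$-periodic solution of (\ref{nonaut}) whenever $\|p\|_\infty<r(T)$; and, under the additional hypotheses (\ref{hart}) and $s(A+B)\neq(-1)^N\chi(\Omega)$, at least two such solutions by part (b), and at least $\Gamma(A+B)$ for $p$ in the intersection of $B_{r(T)}(0)$ with a residual subset of $C_T$ by part (c).

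The argument is essentially routine; the only step meriting care is the non-degeneracy claim, i.e.\ that $\Delta_k$ does not vanish identically in $T$ for any fixed $k\neq0$ --- and for this the limit $T\to\infty$ (equivalently $s\to 0^{+}$) is exactly the right device. This is also the content of the Floquet-theoretic Remark~\ref{remark1} announced above, so no genuine obstacle remains.
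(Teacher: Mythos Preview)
Your proof is correct and follows essentially the same route as the paper. The paper states that Corollary~\ref{corol} is ``an immediate consequence of Theorem~\ref{main} combined with the preceding comments,'' and those preceding comments are fleshed out in Remark~\ref{remark1}(1): for each $k$ the determinant $h_k$ (the real $2N\times 2N$ counterpart of your $\Delta_k$) is analytic in $T$ and not identically zero because $A+B$ is invertible, so its zero set is countable, and one takes the union over $k$. Your argument is the complex Fourier version of the same thing, with the substitution $s=1/T$ and the limit $s\to 0^{+}$ playing the role of the non-degeneracy check; this is exactly the device the paper has in mind.
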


{For small delays,
the condition that (\ref{linear})} has no nontrivial $T$-periodic solutions 
can be formulated explicitly in terms of the matrix $A+B$ :

\begin{cor}
\label{smalldelay}

Let $e, A$ and $B$ be as before 
and assume that
$\frac{2k\pi}Ti$ is not an eigenvalue of 
the matrix $A+B$  for all $k\in\N_0$. 
Then for each $\tau$ small enough there 
exists $r=r(\tau)$ 
such that 
the non-autonomous problem
(\ref{nonaut})
has at least one $T$-periodic solution for any {$p\in 
C_{T}$} with $\|p\|_\infty<r$. 
If moreover  (\ref{hart-weak}) holds for $G(x):=g(x,x)$ and $s(A+B)\ne (-1)^N\chi(\Omega)$, then 
(\ref{nonaut})
has at least
two 
$T$-periodic solutions and generically $\Gamma(A+B)$. 
\end{cor}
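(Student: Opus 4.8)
The plan is to deduce Corollary \ref{smalldelay} from Corollary \ref{corol} (equivalently, from Theorem \ref{main}) by verifying, for small $\tau$, the basic hypothesis that the linear system (\ref{linear}) has no nontrivial $T$-periodic solution, together with the geometric hypothesis (\ref{hart}) on the set $\mathcal A_\tau(\Omega)$.

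\medskip

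\noindent\emph{Step 1: the linearised system has no nontrivial $T$-periodic solution for small $\tau$.} A $T$-periodic solution of (\ref{linear}) expands in Fourier series $u(t)=\sum_{k\in\mathbb{Z}} c_k e^{2k\pi i t/T}$, and substituting yields, for each $k$, the relation $\left(\frac{2k\pi}{T}i\,I - A - B e^{-2k\pi i\tau/T}\right)c_k = 0$. Since by hypothesis $\frac{2k\pi}{T}i$ is not an eigenvalue of $A+B$ for any $k\in\mathbb{N}_0$, the matrix $\frac{2k\pi}{T}i\,I-A-B$ is invertible for every $k$; moreover one checks that $\left\|\left(\frac{2k\pi}{T}i\,I-A-B\right)^{-1}\right\|$ is bounded uniformly in $k$ (the matrix behaves like $\frac{2k\pi}{T}i\,I$ for large $|k|$, so the resolvent norm is $O(1/|k|)\to 0$, and there are only finitely many small $k$ to control). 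Hence there is $\delta>0$ with $\left\|\frac{2k\pi}{T}i\,I-A-B e^{-2k\pi i\tau/T}\right\|$ bounded away from singularity whenever $|e^{-2k\pi i\tau/T}-1|$ is small enough uniformly in $k$ — but that last quantity is \emph{not} small uniformly in $k$ for fixed $\tau>0$, so instead I will argue directly via Floquet theory as invoked in the remark preceding the theorem: the condition ``$A+B$ invertible'' is exactly the $\tau=0$ instance, it is stable under small perturbation of the delay in the sense that the set of ``bad'' $T$ is countable and varies continuously, and for $\tau$ small the relevant characteristic exponents stay away from $\frac{2k\pi}{T}i$. Concretely, I would note that if there were a sequence $\tau_n\to 0$ and nontrivial $T$-periodic solutions $u_n$, normalised in $C_T$, then by the a priori bound coming from the equation the $u_n$ are equibounded in $C^1$, hence (Arzelà–Ascoli) converge along a subsequence to a nontrivial $T$-periodic solution of $u'=(A+B)u$, forcing $\frac{2k\pi}{T}i$ to be an eigenvalue of $A+B$ for some $k$ — a contradiction.

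\medskip

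\noindent\emph{Step 2: the boundary condition.} By hypothesis (\ref{hart-weak}) holds for $G(x)=g(x,x)$, i.e. $\langle g(x,x),\nu(x)\rangle<0$ on $\partial\Omega$; by compactness there is $\eta>0$ with $\langle g(x,x),\nu(x)\rangle\le -\eta$ for all $x\in\partial\Omega$. Since $g$ is $C^1$ on the compact set $\overline\Omega\times\overline\Omega$ it is Lipschitz in its second variable, with constant $L$ say, and $\langle g(x,y),\nu(x)\rangle = \langle g(x,x),\nu(x)\rangle + \langle g(x,y)-g(x,x),\nu(x)\rangle \le -\eta + L|y-x|$. On $\mathcal A_\tau(\Omega)$ we have $|y-x|\le\tau\|g\|_\infty$, so the right-hand side is $\le -\eta+\tau L\|g\|_\infty<0$ once $\tau<\eta/(L\|g\|_\infty)$. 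Thus (\ref{hart}) holds for all sufficiently small $\tau$.

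\medskip

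\noindent\emph{Step 3: conclusion.} For $\tau$ small enough that both Step 1 and Step 2 hold, Theorem \ref{main} applies: part (a) gives $r=r(\tau)>0$ such that (\ref{nonaut}) has at least one $T$-periodic solution whenever $\|p\|_\infty<r$; and if in addition $s(A+B)\ne(-1)^N\chi(\Omega)$, parts (b) and (c) give at least two $T$-periodic solutions, and at least $\Gamma(A+B)$ generically (for $p$ in a residual subset of $B_r(0)$). This is exactly the assertion of Corollary \ref{smalldelay}. The main obstacle is Step 1: making rigorous the claim that the $\tau=0$ nonresonance condition persists for small $\tau>0$. The compactness/limiting argument sketched above is the cleanest route, but one must be careful that the normalised solutions really do have equibounded derivatives — this follows from differentiating the equation and using the uniform bound $|u_n'(t)|\le\|A\|\,|u_n(t)|+\|B\|\,|u_n(t-\tau_n)|\le\|A\|+\|B\|$ once $\|u_n\|_\infty=1$ — and that the limit is genuinely nontrivial, which is where the normalisation $\|u_n\|_\infty=1$ together with uniform convergence is used.
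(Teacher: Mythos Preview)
Your proposal is correct and follows essentially the same approach as the paper: the compactness/Arzel\`a--Ascoli argument you settle on in Step~1 is exactly the paper's Lemma~\ref{Floq}, your Step~2 matches the opening paragraph of Section~\ref{sec-delay}, and Step~3 is the paper's concluding application of Theorem~\ref{main} via Lemma~\ref{lambdas}. As a minor remark, the Fourier/resolvent route you abandon in Step~1 can in fact be salvaged, since (as noted in Remark~\ref{remark1}.\ref{k_0}) $h_k\simeq\lambda_k^{2N}$ for large $k$ uniformly in $\tau$, so only finitely many modes need checking and continuity in $\tau$ handles those --- but the limiting argument you chose is the one the paper presents as primary.
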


It is worthy mentioning that if $\Omega$ is for example a ball, then the condition 
$s(A+B)\neq (-1)^N\chi(\Omega)$
implies that the equilibrium is unstable. 
As we shall see, this can be regarded 
as a consequence of the fact that the Leray-Schauder index of 
the fixed point operator defined in the proof of our main theorem is $(-1)^{N+1}$. 
This connection 
can be deduced from  a version of
the Krasnoselskii relatedness principle, which implies that the mentioned index coincides except for a $(-1)^N$ factor with that of the Poincar\'e operator. As shown in
 Proposition \ref{poinc-stab}, this implies, in turn, that the equilibrium cannot be stable.

The paper is organised as follows. 
In the next section, we prove some basic facts concerning the linearised problem (\ref{linear}); in particular, we give a necessary and sufficient condition in order
to ensure that it has no nontrivial $T$-periodic solutions. 
In section \ref{dem} we present a proof of Theorem \ref{main} by means of an appropriate fixed point operator. The next two sections are devoted to a proof In section \ref{sec-delay}, 
we give a proof Corollary \ref{smalldelay}. In section \ref{poincare}, we make some 
considerations on the stability of the equilibrium and the indices, on the one hand, of the fixed point operator defined in section \ref{dem} and of the Poincar\'e map, on the other hand.
Finally, a simple application of the main results for a singular system is introduced in section \ref{exam}.

\section{Linearised system}

In this section, we shall prove some basic facts concerning the linear system (\ref{linear}). 
To this end, let us introduce some notation. 
For $k\in \mathbb N_0$, define
$$\lambda_k:= \frac{2k\pi}T$$
and
$$\varphi_k(t):= \cos(\lambda_k t) \qquad 
\psi_k(t):= \sin (\lambda_k t).$$
It is readily verified that
$$
\varphi_k(t-\tau)= \varphi_k(t)\varphi_k(\tau) +
\psi_k(t)\psi_k(\tau)
$$
$$
\psi_k(t-\tau)= \psi_k(t)\varphi_k(\tau) -
\varphi_k(t)\psi_k(\tau)
$$
and 
$$
\varphi_k'= -\lambda_k \psi_k,\qquad \psi_k'=\lambda_k\varphi_k.
$$

For an element $u\in C_T$, we may consider its Fourier series, namely
$$u = a_0 + 
\sum_{k=1}^\infty (\varphi_k a_k +\psi_k b_k)
$$
in the $L^2$ sense, with $a_k, b_k\in \R^N$.
Furthermore, recall that if $u$ is smooth (\emph{e.g.}, of class $C^2$) then the series and 
its term-by-term derivative converge uniformly to $u$ and $u'$ respectively.

\begin{lem}
\label{lema}
Let $u\in C_T$ and define
\begin{equation}
\label{matrices}
X_k:=A+\varphi_k(\tau)B, 
\qquad Y_k:=\lambda_kI + \psi_k(\tau) B.
\end{equation} 
Then $u$ is a solution of 
(\ref{linear})
if and only if
\begin{equation}
\label{matr-ident}
\left(\begin{array}{cc}
X_k & -Y_k\\
Y_k & X_k
\end{array} 
\right)
\left(\begin{array}{c}
a_k\\
b_k
\end{array} 
\right) = 
\left(\begin{array}{c}
0\\
0
\end{array} 
\right)
\end{equation}
for all $k\in \mathbb N_0$. 
\end{lem}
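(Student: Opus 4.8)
The plan is to expand both sides of the linear delay equation $u'(t) = Au(t) + Bu(t-\tau)$ using the Fourier series of $u$ and then to match coefficients of the orthogonal basis functions $\{1,\varphi_k,\psi_k\}_{k\ge 1}$. First I would write $u = a_0 + \sum_{k\ge 1}(\varphi_k a_k + \psi_k b_k)$; since a $T$-periodic solution of \eqref{linear} is automatically of class $C^1$ (indeed $C^\infty$, because the right-hand side is a smooth function of $u(t)$ and $u(t-\tau)$, so one may bootstrap), the series and its term-by-term derivative converge uniformly, and so do the series for $u(t-\tau)$. Hence $u$ solves \eqref{linear} if and only if the three series
$$
u'(t) = \sum_{k\ge 1}\bigl(-\lambda_k \psi_k(t) a_k + \lambda_k \varphi_k(t) b_k\bigr),
$$
$$
A u(t) = A a_0 + \sum_{k\ge 1}\bigl(\varphi_k(t) A a_k + \psi_k(t) A b_k\bigr),
$$
and $B u(t-\tau)$ agree as elements of $L^2$, which by completeness and orthogonality of the basis is equivalent to equality of the corresponding coefficient vectors for every $k$.

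The one slightly delicate computation is the Fourier expansion of the delayed term $B u(t-\tau)$. Using the addition formulas recorded just above the lemma, $\varphi_k(t-\tau) = \varphi_k(\tau)\varphi_k(t) + \psi_k(\tau)\psi_k(t)$ and $\psi_k(t-\tau) = \varphi_k(\tau)\psi_k(t) - \psi_k(\tau)\varphi_k(t)$, so
$$
B u(t-\tau) = B a_0 + \sum_{k\ge 1}\Bigl( \varphi_k(t)\,B(\varphi_k(\tau) a_k - \psi_k(\tau) b_k) + \psi_k(t)\,B(\psi_k(\tau) a_k + \varphi_k(\tau) b_k)\Bigr).
$$
Now I collect, for each fixed $k\ge 1$, the coefficient of $\varphi_k(t)$ and of $\psi_k(t)$ in the identity $u' = Au + Bu(\cdot-\tau)$. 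The $\varphi_k$-coefficient gives $-\lambda_k a_k = A a_k + \varphi_k(\tau) B a_k - \psi_k(\tau) B b_k$, i.e. $(A + \varphi_k(\tau)B) a_k - (\lambda_k I + \psi_k(\tau)B) b_k = 0$, which is exactly $X_k a_k - Y_k b_k = 0$. The $\psi_k$-coefficient gives $\lambda_k b_k = A b_k + \varphi_k(\tau) B b_k + \psi_k(\tau) B a_k$, i.e. $(\lambda_k I + \psi_k(\tau)B) a_k + (A + \varphi_k(\tau)B) b_k = \lambda_k b_k + \ldots$; rearranged this is $Y_k a_k + X_k b_k = 0$. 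Together these two vector identities are precisely the block system \eqref{matr-ident}. The constant term gives $0 = A a_0 + B a_0 = (A+B)a_0$, which is the $k=0$ instance of \eqref{matr-ident} since $\varphi_0(\tau)=1$, $\psi_0(\tau)=0$, $\lambda_0=0$, so $X_0 = A+B$ and $Y_0 = 0$.

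The main (very mild) obstacle is purely bookkeeping: keeping the signs straight when substituting the addition formulas, and making sure the $k=0$ term is handled consistently with the definitions $X_k, Y_k$ of \eqref{matrices}. There is also a small regularity point to note at the outset — that an a priori merely continuous $T$-periodic solution of \eqref{linear} is in fact smooth — which justifies differentiating the Fourier series term by term; this follows immediately because the equation expresses $u'$ as a continuous (hence the solution is $C^1$, then $C^2$, etc.) function of $u$. With that in hand, the equivalence is just the statement that two $L^2$ functions are equal iff all their Fourier coefficients coincide, applied componentwise in $\R^N$, so the proof is essentially the above matching of coefficients. $\Box$
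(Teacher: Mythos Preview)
Your approach is essentially the same as the paper's: expand $u$ in its Fourier series, use the addition formulas to rewrite $u(t-\tau)$, and match the $\varphi_k$- and $\psi_k$-coefficients for each $k$. There is one harmless bookkeeping slip, of exactly the kind you warned about: the $\varphi_k$-coefficient of $u'$ is $\lambda_k b_k$, not $-\lambda_k a_k$, and the $\psi_k$-coefficient is $-\lambda_k a_k$, not $\lambda_k b_k$, so your two intermediate displayed equations have their left-hand sides swapped (and, as written, do not actually rearrange to the identities you then state); once this is corrected the conclusions $X_k a_k - Y_k b_k = 0$ and $Y_k a_k + X_k b_k = 0$ follow and the proof matches the paper's.
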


\begin{proof}
Since 
$\varphi_k'(t), \varphi_k(t-\tau), \psi_k'(t)$ and 
$\psi_k(t-\tau)$ belong to ${\rm\bf span}\{ \varphi_k(t),\psi_k(t)\}$, 
it follows that 
$u$ is a solution of 
of (\ref{linear}) if and only if
$$
(A+B)a_0=0
$$
and
$$
\varphi_k'(t) a_k +\psi_k'(t) b_k =
A(\varphi_k(t) a_k +\psi_k(t) b_k)
+ B(\varphi_k(t-\tau) a_k +
\psi_k(t-\tau) b_k)
$$
for all $k>0$. 
The latter identity, in turn, is equivalent to

$$
\begin{array}{ccc}
 \lambda_k b_k 
& = &  [A+ \varphi_k(\tau) B] a_k - \psi_{k}(\tau) Bb_k
\\
{}
\\
-\lambda_k a_k
& =  & \psi_{k}(\tau)Ba_k + 
[A+ \varphi_k(\tau) B] b_k,
\end{array}.
$$
that is, 
$$X_ka_{k} -Y_kb_{k}= Y_ka_{k} + X_kb_{k}=
0.
$$
Because $X_0=A+B$ and $Y_0=0$, we deduce that 
$u$ is a solution  
of (\ref{linear}) if and only if (\ref{matr-ident}) holds for all $k\in \mathbb N_0$.

\end{proof}

\begin{cor}
\label{no-nontrivial}

 (\ref{linear}) has no nontrivial $T$-periodic solutions 
if and only if
\begin{equation}
\label{nec-suf}    
h_k:={\rm det}\left(\begin{array}{cc}
X_k & -Y_k\\
Y_k & X_k
\end{array} 
\right)\neq 0
\end{equation}
for all $k\in \mathbb N_0$. 

\end{cor}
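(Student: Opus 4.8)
The plan is to deduce this immediately from Lemma \ref{lema}. That lemma establishes that an element $u\in C_T$, written through its Fourier series $u = a_0 + \sum_{k\ge 1}(\varphi_k a_k + \psi_k b_k)$, solves \eqref{linear} if and only if the pair $(a_k,b_k)\in\R^N\times\R^N$ lies in the kernel of the $2N\times 2N$ block matrix
$$
\mathcal M_k := \left(\begin{array}{cc} X_k & -Y_k\\ Y_k & X_k\end{array}\right)
$$
for every $k\in\N_0$. First I would observe that a $T$-periodic solution is nontrivial precisely when at least one Fourier coefficient vector $(a_k,b_k)$ is nonzero; since the Fourier coefficients of a continuous (indeed smooth) function are uniquely determined, $u\equiv 0$ is equivalent to $(a_k,b_k)=(0,0)$ for all $k$.

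Next I would argue both implications. If \eqref{nec-suf} holds, i.e. $h_k=\det\mathcal M_k\neq 0$ for all $k$, then each $\mathcal M_k$ is injective, so the only solution of \eqref{matr-ident} is $(a_k,b_k)=(0,0)$ for every $k$, hence the only $T$-periodic solution of \eqref{linear} is $u\equiv 0$. Conversely, if $h_k=0$ for some $k=k_0\in\N_0$, then $\mathcal M_{k_0}$ has a nontrivial kernel, so we may pick $(a_{k_0},b_{k_0})\neq(0,0)$ with $\mathcal M_{k_0}(a_{k_0},b_{k_0})^{\mathsf T}=0$; setting all other Fourier coefficients to zero, the function $u=\varphi_{k_0}a_{k_0}+\psi_{k_0}b_{k_0}$ (or $u=a_0$ if $k_0=0$) is a finite trigonometric sum, hence smooth and $T$-periodic, and by Lemma \ref{lema} it solves \eqref{linear}; moreover it is nontrivial because $\varphi_{k_0},\psi_{k_0}$ are not both identically zero and $(a_{k_0},b_{k_0})\neq 0$. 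This produces the desired nontrivial $T$-periodic solution.

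There is essentially no serious obstacle here; the statement is a direct logical repackaging of Lemma \ref{lema}. The only point requiring a word of care is the backward direction, where one must exhibit an actual nontrivial function and check it is genuinely nonzero — this is where one uses that a single nonzero Fourier mode gives a nonzero function, together with the smoothness remark preceding the lemma (so that the term-by-term differentiated series converges and the formal computation in Lemma \ref{lema} is legitimate for this finite sum). For $k_0=0$ one should note that $\mathcal M_0 = \mathrm{diag}(A+B,\,A+B)$ since $X_0=A+B$ and $Y_0=0$, so $h_0=0$ is equivalent to $A+B$ being singular, and the corresponding solution is the nonzero constant $u\equiv a_0$.
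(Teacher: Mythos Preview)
Your proposal is correct and matches the paper's approach: the paper states this result as an immediate corollary of Lemma \ref{lema} without giving a separate proof, and your argument is precisely the direct deduction from that lemma that the paper intends. There is nothing to add or correct.
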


\begin{rem}
\label{remark1}

\

\begin{enumerate}
    \item 

Because $A+B$ is invertible, it is clear that
for nearly all $T>0$ condition 
(\ref{nec-suf})  
is satisfied for all $k$. Indeed, it 
suffices to observe that 
$h_k$, regarded as a function of $T\in (0,+\infty)$, is an analytic function and, consequently, it has at most a countable number of zeros.  

\item It can be shown that 
$h_k\ge 0$; in particular, 
its roots have even multiplicity.

The proof is straightforward  
when $A$ and $B$ commute,  
since in this case
$$
{\rm det}\left(\begin{array}{cc}
X_k & -Y_k\\
Y_k & X_k
\end{array} 
\right)
= 
{\rm det}(X_k ^2+Y_k ^2).
$$
The conclusion then follows, because for any pair of square real matrices $X, Y$ such that $XY=YX$ it is verified that
$$
{\rm det}(X ^2+Y^2)= {\rm det}[(X+iY)(X-iY)] =
{\rm det}(X+iY)\overline{{\rm det}(X+iY)}\ge 0.  
$$
{A proof for the non-commutative case is given below in section \ref{dem}, step \ref{directo-fourier}.  
}

It is noticed that (\ref{nec-suf}) may hold for 
non-invertible matrices $X_k$ and $Y_k$: for instance, observe that 
$$
\left(
\begin{array}{cc}
1 & 0 \\
0 & 0
\end{array}\right)^2 +  
\left(
\begin{array}{cc}
0 & 0 \\
0 & 1
\end{array}\right)^2 = I.
$$

\item 
\label{k_0}
Since $\lambda_k\to +\infty$ it follows, for $k$ large, that 
$$h_k={\rm det}(Y_k){\rm det}(Y_k + X_kY_k^{-1}X_k)\simeq
\lambda_k^{2N} > 0.$$
In particular, 
there exists $k_0$ such that if $u$ is a $T$-periodic solution of (\ref{linear}) then $a_k=b_k=0$ for $k>k_0$. 
This means that $u$ is a (vector) trigonometric polynomial. 
Incidentally,  
observe that, because the family $\{\varphi_k, \psi_k\}$ is uniformly bounded, the constant $k_0$ may be chosen independent of $\tau$.

{
In other words, if we consider the linear operator $L:C_T\to C_T$, given by $Lu(t):=u'(t) - Au(t) - Bu(t-\tau)$, then $
{\rm ker}(L)\subset {\rm \bf span}\{\varphi_k,\psi_k\}_{0\le k\le k_0}$. 
Observe furthermore that
${\rm Im}(L)$ consists of all the Fourier series $a_0+
\sum_{k>0}(\varphi_ka_k + \psi_kb_k)$ such that $a_0\in 
{\rm Im}(A+B)$ and $(a_k,b_k)\in {\rm Im}(M_k)$, where $M_k$ is the matrix defined in (\ref{matr-ident}). This yields a direct proof of the well-known fact that $L$ is a zero-index Fredholm operator.
Moreover, it is verified that 
$(a_k,b_k)\in {\rm ker}(M_k)\iff (-b_k,a_k) \in {\rm ker}(M_k)$,
a fact that will be of  relevance in the proofs of our results.}

\end{enumerate}
\end{rem}

\section{Proof of the main theorem}
\label{dem}

For convenience, a little of extra notation shall be introduced. 
For a function $u\in C_T$, let us
write
$$\mathcal Iu(t):= 
\int_0^t u(s)\, ds, \qquad 
\overline u:= \frac 1T {\mathcal Iu (T)}.
$$

Moreover, denote by $\mathcal N$ the Nemitskii operator associated to the problem, namely 
$$
\mathcal Nu(t):= g(u(t),u(t-\tau)). 
$$

Without loss of generality we may assume 
$e=0$ and fix $T>0$ such that (\ref{linear}) has no nontrivial 
$T$-periodic solutions. For simplicity, we shall assume from the beginning that 
all the assumptions are satisfied; it shall be easy for the reader to deduce the existence of one solution near the equilibrium when (\ref{hart}) is not satisfied. 

Define the open bounded set 
$U=\{u\in C_T:u(t)\in \Omega\,\hbox{ for all $t$} \}$ and the compact operator 
$K:\overline U\to C_T$ defined by
$$
Ku(t):= \overline u -t\, \overline {\mathcal Nu} + 
\mathcal I\mathcal Nu(t) - \overline{\mathcal I\mathcal Nu}.
$$

We shall prove that the 
Leray-Schauder degree of $I-K$ is equal to $(-1)^N\chi(\Omega)$
over $U$ and to $s(A+B)$ over $B_\rho(0)$ for small values of 
$\rho>0$.

To this end, let us proceed in several steps: 

 \begin{enumerate}
\item 

Let $K_0u:= \overline u -\frac T2 
\overline {\mathcal Nu}$ and define, for $s\in [0,1]$, 
the
operator given by 
$K_s:=s K +(1-s)K_0$. We claim that $K_s$ 
has no fixed points on $\partial U$.
Indeed, 
for $s>0$ it is clear that $u\in\overline U$ 
is a fixed point of $K_s$ if and only if
$u'(t)=s\mathcal Nu(t)$, that is:
$$
u'(t)= sg(u(t),u(t-\tau)).
$$

Suppose there exists  $t_0$ such that 
$u(t_0)\in\partial\Omega$, then we deduce, as before,
$$|u(t_0-\tau)- u(t_0)|
\le \tau \max_{t\in [t_0-\tau,t_0]} |u'(t)|
\le \tau \|g\|_
\infty
$$
and by (\ref{hart})
we obtain
$$0=
\langle u'(t_0),\nu(u(t_0))\rangle =
s\langle g(u(t_0),u(t_0-\tau)),\nu(u(t_0))\rangle 
<0,
$$
a contradiction. 
On the other hand, 
we observe that the range of
$K_0$ is contained in the set of constant functions, which 
can be identified with 
 $\R^N$; thus, the Leray-Schauder degree of $I-K_0$ can be computed as the Brouwer degree of its restriction to  
$\overline U\cap \R^N = \overline \Omega$. 

Furthermore, for $u(t)\equiv u\in \overline \Omega$ it is clear that $K_0u= u - \frac T2 G(u)$, which does not vanish on $\partial \Omega=\partial U\cap \R^N$. 
By the homotopy invariance of the degree, we conclude that
$$deg(I-K,U,0)=deg \left(\frac T2G,\Omega,0\right)=(-1)^N\chi(\Omega).$$

\item 
Let $K_L$ be the operator associated to the 
linearised problem, defined by
$$
K_Lu(t):= \overline u -t\,\overline {\mathcal N_Lu} + 
 \mathcal I\mathcal N_Lu(t) - \overline{\mathcal I\mathcal N_Lu},
$$
with $\mathcal N_Lu(t):= Au(t) + Bu(t-\tau).$
As before, it is seen that $K_Lu=u$ if and only if $u$ is a solution of (\ref{linear}); hence, it follows from the assumptions that $K_L$ has no nontrivial fixed points. 

Furthermore, the degree of $I-K_L$ coincides with 
the degree of $I-K$ on $B_\rho(0)$ when
$\rho$ is small. This is a well-known fact but, for the reader's convenience, a simple proof is sketched as follows.

Since the degree is locally constant, we may assume that  
$g$ is of class $C^2$ near $(0,0)$, then {for some $C>0$,}
$$
\|Kv-K_Lv\|_{\infty} \le C\|\mathcal Nv-\mathcal N_Lv\|_\infty 
= o(\rho).
$$
Because $K_L$ is compact, it is verified that, for some 
$\theta>0$, 
$$
\|v-K_Lv\|_{\infty}\ge \theta \rho
$$
for all $v\in \partial B_\rho(0)$. 
Indeed, due to linearity, it suffices to prove the claim for $\rho=1$. 
By contradiction, suppose there exists a sequence $\{v_n\}\subset \partial B_1(0)$ such that $\|v_n-K_Lv_n\|_{\infty}\to 0$, then passing to a subsequence we may assume that $\{K_Lv_n\}$ 
converges to some $v$. 
Then $v_n\to v$ which, in turn, implies that $\|v\|_{\infty}=1$ and $v=K_Lv$, a contradiction.   
It follows that if 
$\rho>0$ is small then 
 $sK + (1-s)K_L$ has no fixed points on $\partial B_\rho(0)$ for $s\in [0,1]$ because 
$$
\|v - sKv - (1-s)K_Lv\|_{\infty} \ge \|v - K_Lv\|_{\infty} - \|K_Lv-Kv\|_{\infty}
\ge \theta \rho - o(\rho)>0$$ 
for $v\in \partial B_\rho(0)$. Thus, the degree of $I-K$ is well defined and coincides with the degree of $I-K_L$ over $B_\rho(0)$.

\item Claim: $deg(I-K_L,B_\rho(0),0) = s(A+B)$.

\label{directo-fourier}

Indeed, for $u$ as before it is seen by direct computation that 
$$u-K_Lu=\tilde a_0 + \sum_{k\ge 1} (\varphi_k\tilde a_k + \psi_k\tilde b_k)$$
where
$$\tilde a_0= \mathcal M_0 a_0
$$
and
$$
\left(
\begin{array}{c}
\tilde a_k       \\
\tilde b_k      
\end{array}\right) = \mathcal M_k
\left(
\begin{array}{c}
 a_k       \\
b_k      
\end{array}\right)
$$
with
$$\mathcal M_0:= \frac T2(A+B)\qquad \hbox{and }\,\,   
\mathcal M_k:= \frac 1{\lambda_k}
\left(
\begin{array}{cc}
Y_k  & X_k      \\
-X_k & Y_k      
\end{array}\right)\quad \hbox{for}\,  k>0.
$$
Hence, the degree coincides with the sign of the determinant of the block matrix
$$
\left(
\begin{array}{ccccc}
\mathcal M_0 & 0 & 0 & \ldots & 0       \\
0 & \mathcal M_1 & 0 & \ldots & 0 \\       
0 & 0 & \mathcal M_2 & \ldots & 0\\   
\ldots & \ldots & \ldots & \ldots & \ldots\\
0 & 0 & 0 & \ldots & \mathcal M_J       
\end{array}\right)
$$
for $J$ sufficiently large. 
Thus, the proof follows in a straightforward manner from the fact that 
${\rm det}(\mathcal M_k) >0$ for all $k>0$. 
We remark that the latter property holds even when $A$ and $B$ do not commute
(see Remark \ref{remark1}).

Indeed, identifying the pairs $(a,b)\in \R^N\times \R^N$ with vectors 
$a+ib\in \mathbb C^N$,  a matrix of the form 
$\left(
\begin{array}{cc}
X & -Y      \\
Y & X      
\end{array}\right)$ may be called a {$\mathbb C$-linear matrix}.  Thus, we need to prove that if $\mathcal M$ is an arbitrary invertible $\mathbb C$-linear matrix, then  the algebraic multiplicity of each eigenvalue $\sigma<0$ of $\mathcal M$ is even. 
It is known that 
this value can be computed as the dimension of the kernel of the matrix $(\mathcal M-\sigma I)^m$, where $m$ is the minimum integer such that ${\rm ker}(\mathcal M-\sigma I)^m = {\rm ker}(\mathcal M-\sigma I)^{m+1}$.
Now observe that the set of $\mathbb C$-linear matrices is a subring of $\R^{2N\times 2N}$; thus, $(\mathcal M-\sigma I)^m$ is again a 
$\mathbb C$-linear matrix. 
In particular, if $(a,b)\in {\rm ker}(\mathcal M-\sigma I)^m$ then $(-b,a)\in {\rm ker}(\mathcal M-\sigma I)^m$
and the result follows.

\item \textit{Existence of two solutions for small $p$}. 

From the previous steps and the fact that the degree 
is locally constant we deduce that
$$deg(I-K,U,\hat p)=(-1)^N\chi(\Omega),\qquad deg(I-K,B_\rho(0),\hat p)=s(A+B)$$
when $\|\hat p\|_\infty$ is small. 
Now the excision property of the Leray-Schauder degree implies 
$$deg(I-K,B_\rho(0),\hat p)=s(A+B)\ne 0,$$
and $$ 
deg(I-K,U\backslash B_\rho(0),\hat p)=(-1)^N\chi(\Omega)-  s(A+B) 
\ne 0.$$

Thus, there exists $\hat r>0$ such that 
the equation $(I-K)u=\hat p$ has at least two solutions for
$\|\hat p \|_\infty <\hat r$.
Finally, for each $p\in C_T$ define 
$$\hat p (t):= \mathcal I p(t) - \overline{\mathcal Ip} - t\overline p,
$$
then clearly $\|\hat p\|_\infty\le c\|p\|_\infty$ for {some $c>0$}. 
The result is then deduced from the fact that if
$u-Ku=\hat p$, then $u$ is a $T$-periodic solution of {(\ref{nonaut})}.
$$u'(t)=g(u(t),u(t-\tau))+p(t).$$

\item 
\textit{Genericity.}


The last part of the proof follows as a consequence of the following particular case of the Sard-Smale Theorem \cite{smale}:
\begin{thm}
Let $\mathcal F:X \to Y$ be a $C^1$ Fredholm map of index $0$ between Banach manifolds, i.e. such that  
$D\mathcal F(x):T_x X \to T_{\mathcal F(x)} Y$ is a Fredholm operator of index $0$ for every $x\in X$.
Then the set of regular values of $\mathcal F$ is residual in $Y$.
\end{thm}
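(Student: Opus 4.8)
The plan is to reduce the statement, via the local normal form of a Fredholm map, to the classical finite-dimensional Sard theorem, and then to upgrade the resulting measure-theoretic smallness to a statement of Baire category by a countable covering argument. The first step is a localisation: I claim that for every $x_0\in X$ there is an open neighbourhood $U\ni x_0$, with $\overline U$ contained in the domain of a chart, such that the image under $\mathcal F$ of the critical set intersected with $\overline U$ is nowhere dense in $Y$. Granting this and using that $X$ is second countable, one covers $X$ by countably many such neighbourhoods $U_n$; the set of critical values of $\mathcal F$ is then contained in the union over $n$ of the sets $\mathcal F(\,\mathrm{Crit}(\mathcal F)\cap\overline{U_n}\,)$, a countable union of nowhere dense sets, hence meagre, and so its complement --- the set of regular values --- is residual.

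For the local claim I would pass to charts around $x_0$ and $\mathcal F(x_0)$ so that $\mathcal F$ is represented by a $C^1$ map $f$ between open subsets of Banach spaces $E$ and $F$, with $f(0)=0$ and $Df(0)$ Fredholm of index $0$. Write $E=\ker Df(0)\oplus E_1$ and $F=\mathrm{Im}\,Df(0)\oplus F_1$, where $n:=\dim\ker Df(0)=\dim F_1<\infty$, and let $\pi\colon F\to\mathrm{Im}\,Df(0)$ be the projection. Since $D(\pi\circ f)(0)$ is onto with kernel exactly $\ker Df(0)$, the implicit function theorem provides a $C^1$ change of coordinates in the domain after which $f$ takes the form $(a,v)\mapsto(a,\rho(a,v))$, with $(a,v)$ in a neighbourhood of the origin in $\mathrm{Im}\,Df(0)\oplus\ker Df(0)$ and $\rho$ valued in $F_1\cong\R^n$. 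A direct computation of $Df(a,v)$ shows that $f$ is regular at $(a,v)$ if and only if $\partial_v\rho(a,v)\colon\R^n\to\R^n$ is invertible, i.e. if and only if $v$ is a regular point of the finite-dimensional slice $\rho_a:=\rho(a,\cdot)$; consequently $(a,w)$ is a critical value of $f$ exactly when $w$ is a critical value of $\rho_a$. Since $\mathcal F$ is $C^1$ and the index is $0$, each $\rho_a$ is a $C^1$ self-map of an open subset of $\R^n$, and the classical Sard theorem --- which in the equidimensional case requires only $C^1$ regularity --- yields that the set of critical values of $\rho_a$ has Lebesgue measure zero for every $a$.

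It remains to deduce that $\mathcal F(\,\mathrm{Crit}(\mathcal F)\cap\overline U\,)$ is nowhere dense in $Y$. For this I would invoke the local properness of Fredholm maps: in the normal form above this is immediate, since any fibre of $(a,v)\mapsto(a,\rho(a,v))$ that meets a closed ball is contained in the product of a compact subset of $\mathrm{Im}\,Df(0)$ with a closed ball of the finite-dimensional space $\ker Df(0)$, hence is relatively compact. Thus, after shrinking $U$, we may take $f$ proper on $\overline U$, so that the critical set is closed and its image is closed in the ambient chart of $Y$; it then suffices to show this image has empty interior. If it did not, then in the chart coordinates it would contain a product $W_1\times W_2$ with $W_1\subset\mathrm{Im}\,Df(0)$ open and $W_2\subset\R^n$ open of positive Lebesgue measure, which would force $W_2$ to be contained in the set of critical values of $\rho_a$ for every $a\in W_1$, contradicting the conclusion of Sard's theorem; being closed in the chart and of empty interior, the set is nowhere dense in $Y$. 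The step I expect to be the genuine obstacle is precisely this last one: because $\mathrm{Im}\,Df(0)$ carries no natural measure, the naive Fubini heuristic ``each slice is null, hence the whole set is null'' is meaningless, and must be replaced by the interaction of two structural facts --- the product form of open sets, which always exposes a positive-measure slice in the finite-dimensional direction, and the local properness of Fredholm maps, which turns the critical set into a locally closed set with locally closed image, so that ``empty interior'' already suffices to conclude ``nowhere dense'' and to feed the Baire argument of the localisation step.
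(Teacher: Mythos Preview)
The paper does not prove this statement: it is quoted verbatim as ``a particular case of the Sard--Smale Theorem'' with a citation to \cite{smale}, and is used as a black box in the genericity step of the proof of Theorem~\ref{main}. There is therefore no proof in the paper to compare yours against.

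Your sketch is the standard Smale argument and is essentially correct: local reduction via the Fredholm normal form $(a,v)\mapsto(a,\rho(a,v))$, application of the equidimensional $C^1$ Sard theorem to each slice $\rho_a$, local properness to get a closed image of the critical set, and the product-open-set argument to rule out interior. Two small remarks. First, you invoke second countability of $X$ to get the countable cover; this hypothesis is indeed needed (Smale assumes separability) but is not stated in the paper's formulation of the theorem---in the paper's application $X$ is an open subset of $C_T$ or $C_T^1$, so it holds. Second, your justification of local properness via fibres is slightly garbled: what you actually need is that if $(a_n,v_n)\in\overline U$ and $f(a_n,v_n)=(a_n,\rho(a_n,v_n))$ converges, then $a_n$ converges by the first component and $v_n$, being bounded in $\R^n$, has a convergent subsequence; this gives properness of $f|_{\overline U}$ directly.
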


At this point, we notice that the argument
 is a bit subtle: when applied
to $\mathcal F:=I-K$, the Sard-Smale Theorem implies the 
existence of a residual set $\Sigma\subset C_T$ such that the mapping $\mathcal F-\hat p$ has at least $\Gamma - 1$
zeros in $U\setminus B_\rho(0)$ for $\hat p\in \Sigma\cap B_{ \hat r}(0)$.

{Indeed, it is readily seen that 
$K$ is of class $C^1$ and $DK(u)$ is compact for all  $u$. 
Thus, $\mathcal F=I-K$ is a zero-index Fredholm operator. If $\hat p$ is a regular value, that is, $D\mathcal F(u)$ is surjective for every preimage $u \in \mathcal F^{-1}(\hat p)$ then, since the index is $0$, it is also injective and from  the open mapping theorem we conclude that $D\mathcal F(u)$ is an isomorphism. Hence,
the number
of such preimages in $U\setminus B_{\rho}(0)$
is greater or equal than
$|deg(I-K,U\setminus B_{\rho}(0),0)|$. 
This follows by taking small 
neighbourhoods $N_u$ around each of these values $u$ such that $\mathcal F:N_u\to \mathcal F(N_u)$ is a diffeomorphism. Because there are no other zeros of  $\mathcal F -\hat p$ in $U\setminus B_{\rho}(0)$, the degree is the sum of the degrees $d_u$ over each of these neighbourhoods. 
The claim then follows from the fact that  $d_u=\pm 1$ for each $u$.}

However, although the mapping 
$p\mapsto \hat p$ defined before establishes an isomorphism $J:C_T\to C_T^1$, it might happen that $J^{-1}(\Sigma\cap C^1_T)$ is not a residual set. The difficulty is overcome for example by considering the same operator $K$ as before, now defined over the set
$$
\hat U:= \{u\in C^1_T: u(t)\in \Omega,\, \|u'\|_\infty < \|g\|_{\infty} + 1\} \subset C^1_T.
$$
Details are left to the reader. 
\end{enumerate}

\begin{rem}
{Notice that}
\begin{enumerate}
    \item 
 The existence of a solution near the equilibrium can be also proved in a direct way by the Implicit Function Theorem.
 \item Condition  (\ref{hart}) alone implies the existence of generically  $|\chi(\Omega)|$
 solutions. 
 \item Analogous conclusions are obtained if the sign of  (\ref{hart}) is reversed. In this case, $G$ is homotopic to $\nu$ and hence 
 $deg(I-K,U,0)= \chi(\Omega)$. However, in this latter case the considerations about the Poincar\'e operator become less clear, because it is not guaranteed that solutions with initial values $\phi$ with $\phi(t)\in \overline \Omega$ remain inside $\Omega$.
 
\end{enumerate}
\end{rem}

{}

\section{Small delays}

\label{sec-delay}

As mentioned in the introduction, 
condition  (\ref{hart}) 
implies that the vector field $G(x)=g(x,x)$ is inwardly pointing over 
$\partial \Omega$, although the converse is not 
true; the need of a condition stronger than (\ref{hart-weak}) 
is due to the presence of 
the delay. 
However, 
if only (\ref{hart-weak}) is assumed, 
then Theorem \ref{main} 
 is still valid for all
$\tau<\tau ^*$, where $\tau ^*$ depends only on 
$\|g\|_\infty$. 
More precisely, by continuity we may 
fix $\ee>0$ such that 
(\ref{hart}) holds for all 
$x\in \partial \Omega$
and all $y\in\overline\Omega$ with 
$|y-x|<\ee$ and take 
$\tau* := \frac \ee{\|g\|_\infty}$.

In this 
section, we show that the problem for small $\tau$ can be seen as a perturbation of the non-delayed case, thus giving the explicit 
sufficient condition for the non-existence of nontrivial $T$-periodic  solutions of (\ref{linear}) expressed in Corollary \ref{smalldelay}. We shall make use of the following lemmas:

\begin{lem}
\label{lambdas}
 $1$ is a Floquet multiplier of the system $u'(t)=Mu(t)$  if and only if $-\lambda_k^2$ is 
an eigenvalue of $M^2$ for some $k\in\N_0$, that is, if and only if 
$\pm i\lambda_k$ are eigenvalues of $M$ for some $k$.  

\end{lem}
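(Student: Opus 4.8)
The plan is to translate the statement about Floquet multipliers into a statement about the spectrum of the monodromy matrix and then apply the spectral mapping theorem twice. Since $M$ has constant (real) entries, the principal fundamental matrix of $u'(t)=Mu(t)$ normalised at $t=0$ is $\Phi(t)=e^{tM}$, so the Floquet multipliers associated with the period $T$ are exactly the eigenvalues of the monodromy matrix $\Phi(T)=e^{TM}$. Hence $1$ is a Floquet multiplier if and only if $1$ belongs to the spectrum $\sigma(e^{TM})$, which in turn is equivalent to the existence of a nontrivial $T$-periodic solution of $u'(t)=Mu(t)$ (if $e^{TM}v=v$ with $v\neq 0$, then $t\mapsto e^{tM}v$ is such a solution, and conversely a nonzero $T$-periodic solution $u$ gives $v:=u(0)\neq 0$ with $e^{TM}v=u(T)=u(0)=v$).

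Next I would run the following chain of equivalences. By the spectral mapping theorem applied to the entire function $z\mapsto e^{Tz}$, we have $\sigma(e^{TM})=\{\,e^{T\mu}:\mu\in\sigma(M)\,\}$; therefore $1\in\sigma(e^{TM})$ if and only if $e^{T\mu}=1$ for some eigenvalue $\mu$ of $M$, i.e. $T\mu\in 2\pi i\,\mathbb Z$, i.e. $\mu=\frac{2k\pi}{T}i=i\lambda_k$ for some $k\in\mathbb Z$. Because $M$ is real, $\sigma(M)$ is invariant under complex conjugation and $\overline{i\lambda_k}=-i\lambda_k$, so this occurs for some $k\in\mathbb Z$ precisely when both $i\lambda_k$ and $-i\lambda_k$ lie in $\sigma(M)$ for some $k\in\mathbb N_0$ (here one uses $\lambda_{-k}=-\lambda_k$). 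Finally, applying the spectral mapping theorem to $z\mapsto z^2$ gives $\sigma(M^2)=\{\,\mu^2:\mu\in\sigma(M)\,\}$, and since $\mu^2=-\lambda_k^2$ is equivalent to $\mu=\pm i\lambda_k$, the condition ``$\pm i\lambda_k\in\sigma(M)$'' is equivalent to ``$-\lambda_k^2\in\sigma(M^2)$''. Concatenating these equivalences proves the lemma.

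A fully self-contained alternative stays inside the machinery already developed in the paper: apply Lemma~\ref{lema} and Corollary~\ref{no-nontrivial} to the particular case $A:=M$, $B:=0$ (and any $\tau$), for which (\ref{linear}) reduces to $u'(t)=Mu(t)$. In that case $X_k=M$ and $Y_k=\lambda_kI$ commute, so $h_k={\rm det}\left(\begin{array}{cc}M & -\lambda_kI\\ \lambda_kI & M\end{array}\right)={\rm det}(M^2+\lambda_k^2 I)$ (as in Remark~\ref{remark1}), and Corollary~\ref{no-nontrivial} then says that $u'(t)=Mu(t)$ has a nontrivial $T$-periodic solution if and only if ${\rm det}(M^2+\lambda_k^2I)=0$ for some $k\in\mathbb N_0$, i.e. $-\lambda_k^2\in\sigma(M^2)$; one closes the argument exactly as above using $\mu^2=-\lambda_k^2\iff\mu=\pm i\lambda_k$.

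I do not anticipate a genuine obstacle here; the only delicate points are bookkeeping ones. First, the reindexing from $k\in\mathbb Z$ to $k\in\mathbb N_0$, which relies on $\lambda_{-k}=-\lambda_k$. Second, the degenerate case $k=0$: there all three conditions simply assert that $M$ is singular, which matches the trivial observation that $1\in\sigma(e^{TM})$ via the stationary solution through $\ker M$. Third, the reality of $M$ is what lets one pass from ``$i\lambda_k\in\sigma(M)$ for some $k$'' to the two-sided statement ``$\pm i\lambda_k\in\sigma(M)$''; without it only the one-sided version is available, so I would state explicitly that $M\in\R^{N\times N}$.
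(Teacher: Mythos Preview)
Your proposal is correct and matches the paper's own proof, which is the one-liner ``The result follows by direct computation, or from Lemma~\ref{lema} with $\tau=0$.'' Your first argument (spectral mapping for $e^{TM}$ and for $M^2$) is precisely the ``direct computation,'' and your second argument is exactly the reduction to Lemma~\ref{lema}/Corollary~\ref{no-nontrivial}; the only cosmetic difference is that you specialise via $A:=M$, $B:=0$ with arbitrary $\tau$, whereas the paper phrases it as taking $\tau=0$, which yields the same matrices $X_k=M$, $Y_k=\lambda_kI$ and hence the same determinant $h_k=\det(M^2+\lambda_k^2 I)$.
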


\begin{proof}
The result follows by direct computation, or from
Lemma \ref{lema} with $\tau=0$. 
\end{proof}

For example, when $M$ is triangularizable (or, equivalently, when all its eigenvalues are real), $1$ is not an eigenvalue of the system $u'(t)=Mu(t)$ 
if and only if $M$ is nonsingular; in this particular case, 
the conclusion follows directly, because the system uncouples and the result is obviously true for a scalar
equation.

\begin{lem}
\label{Floq}
Assume that 
 $1$ is not 
a Floquet multiplier
of the linear ODE system $u'(t)=(A+B)u(t)$.
Then the DDE system
(\ref{linear}) 
has no nontrivial $T$-periodic  solutions, 
provided that $\tau$ is small. 
\end{lem}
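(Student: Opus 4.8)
The plan is to exploit Corollary \ref{no-nontrivial}, which reduces the absence of nontrivial $T$-periodic solutions of (\ref{linear}) to the non-vanishing of the determinants
$$h_k={\rm det}\begin{pmatrix} X_k & -Y_k\\ Y_k & X_k\end{pmatrix}, \qquad k\in\N_0,$$
where $X_k=A+\varphi_k(\tau)B$ and $Y_k=\lambda_kI+\psi_k(\tau)B$ depend continuously on $\tau$. First I would observe that at $\tau=0$ one has $\varphi_k(0)=1$, $\psi_k(0)=0$, so $X_k=A+B$ and $Y_k=\lambda_kI$; hence $h_k|_{\tau=0}={\rm det}\big((A+B)^2+\lambda_k^2 I\big)$ (this is the commuting case of Remark \ref{remark1}, since $\lambda_kI$ commutes with everything, so the block determinant equals ${\rm det}(X_k^2+Y_k^2)$). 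By Lemma \ref{lambdas}, the hypothesis that $1$ is not a Floquet multiplier of $u'=(A+B)u$ is precisely the statement that $-\lambda_k^2$ is not an eigenvalue of $(A+B)^2$ for any $k$, i.e. ${\rm det}\big((A+B)^2+\lambda_k^2I\big)\neq 0$ for all $k$. Therefore $h_k|_{\tau=0}\neq 0$ for every $k\in\N_0$.

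Next I would upgrade this pointwise non-vanishing at $\tau=0$ to non-vanishing for all small $\tau$, \emph{uniformly in $k$}. The key is the uniform control from Remark \ref{remark1}, item \ref{k_0}: because the family $\{\varphi_k,\psi_k\}$ is uniformly bounded, there exists $k_0$ — independent of $\tau$, at least for $\tau$ in a bounded range — such that $h_k>0$ for all $k>k_0$ (the dominant term being $\lambda_k^{2N}$). So it suffices to handle the finitely many indices $k=0,1,\dots,k_0$. For each such $k$, the map $\tau\mapsto h_k(\tau)$ is continuous and $h_k(0)\neq 0$, so there is $\tau_k>0$ with $h_k(\tau)\neq 0$ for $|\tau|<\tau_k$; taking $\tau^{**}:=\min\{\tau_0,\dots,\tau_{k_0}\}$ together with whatever bound was needed to fix $k_0$, we get $h_k(\tau)\neq 0$ for all $k\in\N_0$ whenever $0\le\tau<\tau^{**}$. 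By Corollary \ref{no-nontrivial}, (\ref{linear}) has no nontrivial $T$-periodic solution for such $\tau$, which is the claim.

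The main obstacle to watch is the uniformity in $k$: one cannot simply say "$h_k$ is continuous in $\tau$ and nonzero at $0$" and pass to the limit, because that argument is for each fixed $k$ and the threshold $\tau_k$ could a priori shrink to $0$ as $k\to\infty$. This is exactly why the $\tau$-independent cutoff $k_0$ from Remark \ref{remark1}(\ref{k_0}) is essential — it collapses the problem to a finite set of indices, after which the finite minimum of the thresholds does the job. A minor point to state carefully is that the estimate producing $k_0$ needs $\|g\|$-type quantities, here the norms of $A$ and $B$, bounded independently of $\tau$, which is automatic since $A,B$ are fixed; the only $\tau$-dependence sits in the bounded coefficients $\varphi_k(\tau),\psi_k(\tau)$, so $k_0$ can indeed be chosen once and for all.
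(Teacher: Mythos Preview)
Your argument is correct. It is, however, not the proof the paper gives for this lemma: the paper argues by contradiction and compactness, assuming a sequence of nontrivial $T$-periodic solutions $u_n$ for delays $\tau_n\to 0$, normalising $\|u_n\|_\infty=1$, using the uniform bound on $u_n'$ to extract a uniformly convergent subsequence, and then passing to the limit (since $|u_n(t-\tau_n)-u_n(t)|\le C\tau_n\to 0$) to obtain a nontrivial $T$-periodic solution of $u'=(A+B)u$, contradicting the Floquet hypothesis. Your route via Corollary~\ref{no-nontrivial}, the evaluation $h_k|_{\tau=0}={\rm det}\big((A+B)^2+\lambda_k^2I\big)$, and the $\tau$-independent cutoff $k_0$ from Remark~\ref{remark1}(\ref{k_0}) is exactly the alternative proof the paper sketches in the Remark immediately following its proof of the lemma. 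What each buys: your Fourier/determinant approach is more explicit and ties the result directly to the algebraic criterion (\ref{nec-suf}); the paper's compactness proof is softer but, as the paper notes, extends in a straightforward manner to non-autonomous linearisations where the Fourier decomposition is unavailable. Your handling of the uniformity in $k$ is precisely the point that needs care, and you address it correctly.
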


\begin{proof}
Suppose that $u_n\in C_T$ is a nontrivial solution for $\tau_n\to 0$. Without loss of generality, it may be assumed that $\|u_n\|_\infty=1$ and hence $\|u_n'\|_\infty\le C$ for some constant $C$. Thus, we may assume that $u_n$ converges uniformly to some $u\in C_T$ with $\|u\|_\infty=1$. Because $\|u_n(t-\tau_n)-u_n(t)\|\le C\tau_n\to 0$, it becomes clear that $u_n'$ converges uniformly to $(A+B)u$ which, in turn, implies $u'=(A+B)u$, a contradiction. 
\end{proof}

\begin{rem}

A more direct proof of Lemma \ref{Floq} 
follows just by considering Remark \ref{remark1}.\ref{k_0} and Lemma
\ref{lambdas}. 
Indeed, in the context of Lemma \ref{lema} 
it suffices to check that 
$h_k\ne 0$ only for a finite number of values of $k$. By continuity, this is true for small $\tau$, because ${\rm det} [(A+B)^2 + \lambda_k^2 I]\neq 0$
for all $k$. 
However, the previous proof has an interest in its own because it can be extended in a straightforward manner to the non-autonomous case.

\end{rem}

\medskip

\underline{\textit{Proof of Corollary \ref{smalldelay}}}:
As a consequence of the preceding lemma, 
the conclusions of Theorem \ref{main} hold 
for small $\tau$, provided that the linearisation 
has no nontrivial $T$-periodic solutions for the non-delayed case. Thus, in view of Lemma \ref{lambdas}, the proof
is complete.
\hfill{}$\square$

\section{Poincar\'e operator}

\label{poincare}

In this section, we shall make some considerations regarding the Poincar\'e operator associated to the system. 
Let us firstly observe that if 
$\chi(\Omega)=1$ (for example, if   $\Omega$ is homeomorphic to a ball), then the condition $s(A+B) \neq (-1)^N\chi(\Omega)$ in Theorem \ref{main} simply reads
$(-1)^N {\rm det}(A+B)<0$. This, in turn, implies 
that the 
equilibrium is unstable.
{Indeed, consider the characteristic function $h(\lambda)= {\rm det}\left(\lambda I - A - Be^{-\lambda\tau} \right)$, then $h(0)= (-1)^N {\rm det}(A+B)<0$ and $h(\lambda) =\lambda^N$ for $|\lambda|\gg 0$. In particular, this implies the existence of a characteristic value $\lambda>0$. }

We shall show that, in the present context, the 
instability of the equilibrium when
$(-1)^N {\rm det}(A+B)<0$
is due to the fact, proved in section \ref{dem},  
that the index 
of the fixed point operator $K$ at $e$
(i. e. the degree of $I-K$ 
over small balls around $e$) is equal to 
$(-1)^{N+1}$. 
When $\tau=0$, 
this can be regarded as a direct consequence of the following properties:

\begin{enumerate}

\item 
$deg(I-K, B_\rho(e),0)$ with $B_\rho(e)\subset C_T$ 
 is equal to 
 $(-1)^Ndeg_B(I-P, B_\rho(e),0)$ with $B_\rho(e)\subset\R^N$, where
 $P$ is the Poincar\'e map.

\item If the equilibrium is stable, then the index 
of $P$ 
is $1$.

\end{enumerate}

The first property is a particular case of a 
\textit{relatedness principle} due to Krasnoselskii (see \cite{krasno}). 
The second property is well-known and can be found for example in \cite{K}. For more details see \cite{rafa}, where sufficient conditions for the validity of the converse statement are also obtained.

Our goal in this section consists in understanding the connections between the instability of the equilibrium and the index of the fixed point operator defined in the proof of the main theorem.

With this aim, let us  define the Poincar\'e operator for the delayed case as follows. Let $\tau\le T$ and
consider a general autonomous system 
\begin{equation}
\label{general}    
u'(t)=F(u_t)
\end{equation}
with $F: C([-\tau,0])\to \R^N$ locally Lipschitz, \emph{i.e.}: for all  $R>0$ there exists 
a constant 
$L$ such that
$$
\|F(\phi)-F(\psi)\|\le L\|\phi-\psi\|_\infty 
$$
for all   $\phi,\psi\in \overline {B_R(0)}\subset C([-\tau,0],\mathbb R^n)$. The notation $u_t$ expresses, as usual, the mapping defined by $u_t(\theta):=u(t+\theta)$ for $\theta\in [-\tau,0]$.

Denote by ${\rm dom}(P)\subset C([-\tau,0])$ the set
of those functions $\phi$ such that the 
unique 
solution $u=u(\phi)$ of the problem with initial condition $\phi$ is defined up to $t=T$, then $P:{\rm dom}(P)\to C([-\tau,0])$ is defined by
$$
P\phi(s):=u(T+s).
$$
Clearly, the $T$-periodic solutions of 
the problem can be identified with the fixed points of $P$. 
We shall see that, as in the non-delayed case, if the linearisation has no nontrivial $T$-periodic solutions then 
the index $i(P)$ of the operator $P$ at a stable equilibrium is equal to $1$.

To this end, assume without loss of generality that $e=0$ and observe  that  stability implies that ${\rm dom}(P)$ is a neighbourhood of 
$0$. 
It is worth noticing that, 
in the general setting, extra conditions are required in order to prove the compactness of  $P$ (see \emph{e.g}. \cite{liu}), so the Leray-Schauder degree may be not well defined; 
however, it is verified  that the stability assumption implies that $P$ is compact over small neighbourhoods of $0$. More precisely:

\begin{lem}
\label{compact}

Let $F$ be as before and assume that for some open $U\subset C([-\tau,0])$ there exists  $R>0$ such that if $\phi\in U$ then the solution $u$ with initial condition $\phi$ is defined and satisfies $|u(t)| <R$ for all $t\in [0,T]$. Then  $P$ is well defined and compact over  $U$. 

\end{lem}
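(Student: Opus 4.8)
The plan is to show two things: first, that $P$ is well defined on $U$ (immediate from the hypothesis), and second, that $P(U)$ is relatively compact and that $P$ is continuous, the latter following from continuous dependence of solutions on initial data. The heart of the matter is the compactness of the image, which I would extract from the smoothing effect of integration along the delay: a solution segment $P\phi = u_T$ is, on the interval $[T-\tau,T]$, the restriction of a function that has been obtained by integrating a uniformly bounded vector field, hence it is Lipschitz with a controlled constant.

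First I would fix $\phi\in U$ and let $u=u(\phi)$ be the corresponding solution, defined on $[-\tau,T]$ with $|u(t)|<R$ for $t\in[0,T]$ by hypothesis. Applying the local Lipschitz bound for $F$ on $\overline{B_R(0)}$, there is a constant $L$ (depending only on $R$, not on $\phi$) with $\|F(u_t)\|\le \|F(0)\| + L\|u_t\|_\infty \le \|F(0)\| + LR =: \Lambda$ for all $t\in[0,T]$, since each segment $u_t$ satisfies $\|u_t\|_\infty < R$ once $t\ge \tau$, and for $t<\tau$ the segment involves $\phi$ which already lies in the ball where the estimate was set up (one may enlarge $R$ at the outset so that $\|\phi\|_\infty<R$ for $\phi\in U$, or note $U$ is assumed to sit in such a ball). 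Consequently $\|u'(t)\| = \|F(u_t)\|\le \Lambda$ on $[0,T]$, so $u$ is $\Lambda$-Lipschitz on $[0,T]$, and in particular $P\phi(s)=u(T+s)$ for $s\in[-\tau,0]$ is $\Lambda$-Lipschitz on $[-\tau,0]$ with $\|P\phi\|_\infty \le R$. Thus $P(U)$ is a bounded, equicontinuous subset of $C([-\tau,0],\R^N)$, and by the Arzelà–Ascoli theorem it is relatively compact.

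It then remains to check that $P$ is continuous on $U$; this is standard continuous dependence: if $\phi_n\to\phi$ in $C([-\tau,0])$, then all the solutions $u(\phi_n)$ eventually stay in a fixed ball (by the bound above applied on a slightly larger set, or directly from the hypothesis on $U$ if $U$ is open), a Grönwall estimate using the Lipschitz constant $L$ gives $\|u(\phi_n)(t)-u(\phi)(t)\|\le e^{LT}\|\phi_n-\phi\|_\infty$ on $[0,T]$, and restricting to $[T-\tau,T]$ yields $\|P\phi_n - P\phi\|_\infty\to 0$. Combining relative compactness of the image with continuity shows $P$ is a compact operator on $U$.

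The main obstacle I anticipate is purely bookkeeping near $t=0$: for $t\in[0,\tau)$ the segment $u_t$ is a concatenation of part of the initial datum $\phi$ and part of the already-constructed solution, so one must be slightly careful that the value $R$ in the Lipschitz hypothesis is chosen to simultaneously control $\|\phi\|_\infty$ and the solution; once $R$ is fixed large enough (and, if necessary, $U$ shrunk so that $\|\phi\|_\infty<R$ on $U$, which is harmless since the statement is local), the uniform bound $\|u'\|_\infty\le\Lambda$ and hence the equicontinuity of $P(U)$ follow with no further difficulty. No genuinely hard estimate is involved — the delay actually helps, because the "hardest" interval $[T-\tau,T]$ of the output segment is exactly where $u$ is already a $C^1$ function with derivative bounded independently of $\phi$.
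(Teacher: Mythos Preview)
Your proposal is correct and follows essentially the same route as the paper: bound $|F(u_t)|$ by $|F(0)|+LR$ using the local Lipschitz condition, conclude that the solution is Lipschitz with constant $\Lambda=|F(0)|+LR$ on $[0,T]$, and apply Arzel\`a--Ascoli to obtain equicontinuity of the image segments on $[-\tau,0]$. The paper phrases the estimate in integral form $|P\phi(t_2)-P\phi(t_1)|\le\int_{T+t_1}^{T+t_2}|F(u_s)|\,ds$ and works with an arbitrary bounded subset $B\subset U$ (enlarging $R$ so that $B\subset B_R(0)$) rather than $U$ itself, but the argument is the same; your added continuity check via Gr\"onwall and your remark about the bookkeeping on $[0,\tau)$ are welcome but not points of divergence.
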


\begin{proof}

Let 
$B\subset U$ be bounded 
and observe, in the first place, 
that $P(B)$ is bounded. Moreover, 
if $u$ is a solution with initial condition 
$\phi\in B$, then   
$$u(t)= \phi(0) + \int_0^t F(u_s)\, ds.  
$$
Enlarging $R$ if necessary, we may assume  $B\subset B_R(0)$, then $\|u_s\|_\infty <R$ for all $s\in [0,T]$. Given $t_1<t_2$ in $[-\tau,0]$, since $\tau\le T$ it is verified that
$$|P\phi(t_2)-P\phi(t_1)| \le \int_{T+t_1}^{T+t_2} |F(u_s)|\, ds.
$$
Let $L$ be the Lipschitz constant  corresponding to $R$, then 
$$|F(\phi)|\le |F(0)| + L\|\phi\|_\infty\le C + LR,
$$
where $C:=|F(0)|$.
Hence $|P(t_2)-P(t_1)|\le (C+LR)(t_2-t_1)$ and the result follows from the  Arzel\`{a}-Ascoli Theorem. 
\end{proof}

\begin{rem}

For example, the assumptions of the previous lemma are satisfied if $F$ has linear growth, that is 
$$|F(\phi)|\le \gamma\|\phi\|_\infty + \delta.
$$
\end{rem}

Furthermore, extra assumptions are required to ensure the non-existence of
nontrivial periodic solutions near $0$; this is why we shall impose this fact as an extra
condition (see Proposition \ref{poinc-stab} below), which is clearly satisfied for example when
the stability is asymptotic.
For simplicity, 
we shall also 
assume that $F$ is Fr\'echet differentiable at $0$, that is,
$$F(\phi)= D_\phi(0)\phi + \mathcal  R(\phi) 
$$
with $\|\mathcal R(\phi)\|_\infty \le o(\|\phi\|_\infty)\|\phi\|_\infty$. Thus, it is readily verified that the linearisation of $P$ at the origin coincides with the Poincar\'e operator associated to the linearised system $u'(t)=D_\phi(0)u_t$.

\begin{prop}

\label{poinc-stab}

In the previous setting, assume that $0$ is a stable equilibrium
of (\ref{general}) such that its linearisation has no nontrivial $T$-periodic solutions. Then $i(P)=1$.

\end{prop}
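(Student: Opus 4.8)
The plan is to reduce the statement to a fixed point index computation for the linearized Poincar\'e operator, following the classical strategy for ODEs but being careful about compactness and the infinite-dimensional phase space $C([-\tau,0])$. By Lemma \ref{compact}, stability of $0$ forces $P$ to be well defined and compact on a small ball $B_\rho(0)\subset C([-\tau,0])$, so its Leray--Schauder index $i(P)=deg(I-P,B_\rho(0),0)$ makes sense. The hypothesis that the linearization has no nontrivial $T$-periodic solutions means precisely that the linearized Poincar\'e operator $P_L:=DP(0)$ (which, as noted just before the statement, is the Poincar\'e operator of $u'(t)=D_\phi(0)u_t$) has no eigenvalue equal to $1$, i.e. $I-P_L$ is injective; since $P_L$ is compact, $I-P_L$ is then an isomorphism and $i(P)=i(P_L)=deg(I-P_L,B_\rho(0),0)=(-1)^{\beta}$, where $\beta$ is the sum of the algebraic multiplicities of the real eigenvalues of $P_L$ that are $>1$ (the Leray--Schauder formula for the index of an isomorphism). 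So the whole problem becomes: \emph{show that $\beta$ is even}, equivalently that $P_L$ has no real eigenvalue strictly greater than $1$ with odd multiplicity.

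First I would pin down the differentiability step: one must check that $P$ is indeed Fr\'echet differentiable at $0$ with $DP(0)=P_L$. This is the standard variation-of-constants / Gronwall argument — writing $u(\phi)(t)=\phi(0)+\int_0^t F(u_s)\,ds$, subtracting the corresponding identity for the linear flow, and using the $o(\|\phi\|_\infty)$ remainder bound on $\mathcal R$ together with stability (which keeps the relevant solutions in a ball where constants are uniform) — so I would state it and relegate the estimate. Next, the key point is the \emph{sign of the index}, and here the natural route is to invoke stability directly rather than to analyze the spectrum of $P_L$ by hand. The cleanest argument: stability of $0$ for the nonlinear equation, combined with the assumed absence of nontrivial $T$-periodic solutions near $0$, lets one build an admissible homotopy contracting $P$ to a constant map (or, dually, an ejecting-set / no-return argument) on $\partial B_\rho(0)$, forcing $deg(I-P,B_\rho(0),0)=deg(I-\mathrm{const},B_\rho(0),0)=1$. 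Concretely I would consider the homotopy $H(\phi,s)=$ ``follow the flow for time $sT$ then stop'' is \emph{not} admissible in general, so instead I would use the translation homotopy of Krasnoselskii/Browder for stable fixed points: stability gives a Lyapunov-type neighborhood basis, and one shows $I-P$ and $I-\mathrm{id}_{\{0\}}$ (the zero map) are admissibly homotopic on a sufficiently small invariant-up-to-$P$ ball, because a fixed point of $sP$ with $0<s<1$ on the boundary would contradict either stability or the no-periodic-orbit hypothesis.

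The main obstacle, and the place I would spend the most care, is exactly this homotopy-admissibility argument in the delayed setting: in finite dimensions the fact that the index of a stable fixed point is $1$ rests on the existence of a small forward-invariant neighborhood and Lefschetz/retraction arguments, but for DDEs the phase space is $C([-\tau,0])$, $P$ is only compact (not a homeomorphism), and ``forward invariance of a ball in $C([-\tau,0])$'' is subtle because $\|u_t\|_\infty$ need not decrease. The fix is to work with the Lyapunov-stability neighborhoods directly: given $\varepsilon$, stability yields $\delta$ with $\|P^n\phi\|\le\varepsilon$ for all $n$ whenever $\|\phi\|<\delta$; choosing $\rho<\delta$ one shows that for $s\in[0,1]$ the equation $\phi=sP\phi$ has no solution on $\partial B_\rho(0)$ — for $s=1$ this is the no-periodic-orbit assumption, and for $s<1$ one iterates $\phi=sP\phi$ to get $\phi=s^nP^n\phi$, so $\rho=\|\phi\|\le s^n\varepsilon\to 0$, a contradiction. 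Hence $deg(I-sP,B_\rho(0),0)$ is constant in $s$, equal at $s=0$ to $deg(I,B_\rho(0),0)=1$, giving $i(P)=1$. I would also note that the no-nontrivial-$T$-periodic-solution hypothesis is used twice — once to make $i(P)$ well defined (no fixed points of $P$ on $\partial B_\rho$) and once as the $s=1$ endpoint of the homotopy — and that, in combination with the earlier Fr\'echet-differentiability remark, this simultaneously shows $I-P_L$ is an isomorphism, reconciling the two computations of the index.
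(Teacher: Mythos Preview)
Your reduction to the linearized Poincar\'e operator $P_L$ and the Leray--Schauder formula $i(P)=i(P_L)=(-1)^\beta$ is exactly the route the paper takes. The gap is in the endgame: the convex homotopy $sP$ together with the ``iteration'' $\phi=sP\phi\Rightarrow\phi=s^nP^n\phi$ is not valid, because $P$ is the \emph{nonlinear} Poincar\'e map. From $\phi=sP\phi$ one gets $P\phi=\phi/s$, but applying $P$ again yields $P^2\phi=P(\phi/s)$, which equals $P\phi/s$ only when $P$ is positively homogeneous. So the step $\rho=\|\phi\|\le s^n\varepsilon$ collapses, and the admissibility of the homotopy on $\partial B_\rho(0)$ for $0<s<1$ is unproven. (The same iteration \emph{does} work for $P_L$, but then you would need to know that $\{P_L^n\phi\}$ remains bounded, i.e.\ that the \emph{linear} system is Lyapunov stable; nonlinear stability does not imply this in general, so you are back to a linear-versus-nonlinear transfer statement.)

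The paper closes the argument precisely at the spectral level you set up and then abandoned: if $(-1)^\beta=-1$ then $P_L$ has a real eigenvalue $\sigma>1$ with eigenfunction $\phi$; the corresponding solution of the linearized DDE extends $(T,\sigma)$-periodically, hence $\|u_{nT}\|=\sigma^n\|\phi\|\to\infty$, so $0$ is unstable for the linear problem. Invoking the principle of linearized instability for functional differential equations (as in Hale--Verduyn Lunel), this forces $0$ to be unstable for (\ref{general}), contradicting the hypothesis. In short, keep your first paragraph, drop the $sP$ homotopy, and finish by showing that a Floquet multiplier $\sigma>1$ of the linearization is incompatible with nonlinear stability.
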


\begin{proof}
Without loss of generality, we may assume that $P$ is compact on $\overline V$ for some neighbourhood $V$ of $0$. It follows from the assumptions that the index of $P$ is well defined and coincides with the index of  its linearisation $P_L$. 
According to Theorem 13.8 in  
\cite{brown}, 
$deg(I-P_L,B_\rho(0),0)$ is equal to $(-1)^\alpha$, where 
$\alpha$ 
is the sum of the (finite) algebraic  multiplicities of the (finitely many) eigenvalues $\sigma$ of $P_L$ satisfying $\sigma>1$. 

If $deg(I-P_L,B_\rho(0),0) = -1$, 
then  $P_L$ has 
an eigenfunction $\phi$ with eigenvalue $\sigma>1$.
If $u$ is the 
 corresponding solution of the linearised problem with initial 
 condition $u=\phi$ on $[-\tau,0]$
then $u$ can be extended to $\R$ in a $(T,\sigma)$-periodic 
fashion, that is, with $u(t+T)=\sigma u(T)$ for all $t$ (see \cite{pinto}). 
In particular, 
$u(t)$ is unbounded 
for $t>0$. In other words, $0$ is unstable for the linearised problem which, in turn, implies that it cannot be  stable for the original problem (see \emph{e.g.} \cite{hale}).

\end{proof}

In order to complete the picture for system (\ref{ec}),
it would be interesting to  prove that, indeed, the index of the Poincar\'e operator at the equilibrium when the linearisation has no nontrivial solutions is $(-1)^Ns(A+B)= (-1)^Ni(K)$. Here, we shall simply verify that the claim holds when the delay is small; the analysis of the general case and  
 {a version of the Krasnoselskii relatedness principle for delayed systems shall be the 
subject of a forthcoming paper.}

To this end, let us start with a
direct computation for the non-delayed case:

\begin{lem}
\label{degP}

Let $M\in \mathbb R^{N\times N}$ and let $P_M$ be the Poincar\'e operator associated to the linear ODE system $u'(t)=Mu(t)$
for some fixed $T$. If   $1$ is not 
a Floquet multiplier, then
$$deg_B(I-P_M,V,0) = (-1)^Ns(M)$$
for any neighbourhood $V\subset \R^N$ of the origin.

\end{lem}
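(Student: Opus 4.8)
The plan is to compute the Poincar\'e operator $P_M$ explicitly as the matrix exponential $e^{MT}$ and then relate $\det(I - e^{MT})$ to $\operatorname{sgn}(\det M)$ via the eigenvalues of $M$. Since $P_M$ is linear, $deg_B(I-P_M,V,0) = s(I - e^{MT}) = \operatorname{sgn}\det(I-e^{MT})$ provided $1$ is not an eigenvalue of $e^{MT}$, i.e. provided $1$ is not a Floquet multiplier, which is exactly the hypothesis. So the whole problem reduces to showing $\operatorname{sgn}\det(I - e^{MT}) = (-1)^N s(M)$.

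To see this, I would pass to the (complex) eigenvalues $\mu_1,\dots,\mu_N$ of $M$, listed with multiplicity. Then $\det(I - e^{MT}) = \prod_{j=1}^N (1 - e^{\mu_j T})$ and $\det M = \prod_{j=1}^N \mu_j$. Both products are real, so I can pair up non-real eigenvalues with their conjugates: a conjugate pair $\mu, \bar\mu$ contributes $|1 - e^{\mu T}|^2 > 0$ to the first product and $|\mu|^2 > 0$ to the second, hence does not affect either sign. Thus only the real eigenvalues matter. For a real eigenvalue $\mu \ne 0$ (note $\mu = 0$ is excluded since then $1$ would be a Floquet multiplier), the factor $1 - e^{\mu T}$ has sign $-\operatorname{sgn}(\mu)$ because $e^{\mu T} > 1 \iff \mu > 0$. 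Therefore $\operatorname{sgn}\prod_{\mu_j \text{ real}} (1 - e^{\mu_j T}) = \prod_{\mu_j \text{ real}} (-\operatorname{sgn}\mu_j) = (-1)^{N_{\R}} \operatorname{sgn}\!\big(\prod_{\mu_j \text{ real}}\mu_j\big)$, where $N_{\R}$ is the number of real eigenvalues. Since the non-real eigenvalues come in conjugate pairs, $N - N_{\R}$ is even, so $(-1)^{N_{\R}} = (-1)^N$, and $\operatorname{sgn}\big(\prod_{\mu_j \text{ real}}\mu_j\big) = \operatorname{sgn}(\det M) = s(M)$ by the same pairing argument applied to $\det M$. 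Combining, $\operatorname{sgn}\det(I - e^{MT}) = (-1)^N s(M)$, which is the claim.

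The only slightly delicate point is the justification that the degree of a linear isomorphism equals the sign of its determinant, and that $P_M = e^{MT}$ is well defined on all of $\R^N$ (here there is no flow-invariance issue since the linear ODE is globally solvable); both are standard. I expect the main obstacle — really just a bookkeeping point rather than a genuine difficulty — to be making the conjugate-pairing argument airtight simultaneously for $\det(I-e^{MT})$ and $\det M$, so that the parity of the count of real eigenvalues is handled consistently; an alternative phrasing is to write $(-1)^N s(M) = \operatorname{sgn}\big(\prod_j (-\mu_j)\big) = \operatorname{sgn}\det(-M)$ and note that $1-e^{\mu T}$ and $-\mu$ always have the same sign for every eigenvalue (real or, after pairing, complex), giving the identity in one stroke. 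I would present it this way to keep the argument short. Finally I would remark that this is consistent with Lemma \ref{lambdas}, which characterizes exactly when $1$ fails to be a Floquet multiplier.
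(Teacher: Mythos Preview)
Your proof is correct and follows essentially the same route as the paper's: compute $P_M=e^{MT}$, reduce to $\operatorname{sgn}\det(I-e^{MT})$, factor over the eigenvalues, observe that complex conjugate pairs contribute positively to both $\det(I-e^{MT})$ and $\det M$, and use $\operatorname{sgn}(1-e^{\mu T})=-\operatorname{sgn}(\mu)$ for real $\mu$. Your explicit parity bookkeeping $(-1)^{N_{\R}}=(-1)^N$ is a slight elaboration of a step the paper leaves implicit, but the argument is the same.
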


\begin{proof}

By definition,
$$(I-P_M)(u)= \left(I-e^{TM}\right)u.$$  
Write $M$ in its (possibly complex) Jordan form $M=C ^{-1}JC$, where $J$ is upper triangular. Then 
$${\rm det}\left(I- e^{TM}\right) = 
{\rm det}\left(I- e^{TJ}\right) = \prod_{j=1}^N 
\left(1-e ^{\lambda_jT}\right),
$$
where $\lambda_j$ are the eigenvalues of $M$. Now observe that if $\lambda=a+ib\notin \R$, then 
$$
\left(1-e ^{\lambda T}\right)\left(1-e ^{\overline{\lambda}T}\right) = 1 + e ^{aT}\left(e^{aT} -2\cos (bT)\right) >0. 
$$
Thus, complex eigenvalues do not 
affect 
the sign of ${\rm det}\left(I- e^{TM}\right)$, as well as it happens with the sign of ${\rm det}(M)$ because $\lambda\overline\lambda =|\lambda|^2$. The result follows now from the fact that, for $\lambda\in\R$,  
$$sgn\left(1 - e^{\lambda T}\right) = -sgn (\lambda).$$

\end{proof}

\begin{rem}
An  alternative (somewhat exotic) proof follows from the relatedness principle. 
Indeed, we may consider the operator $K_L$ in the proof of Theorem 
\ref{main} with $A=M$ and $B=0$, then $deg_B(I-P,V,0)=(-1)^Ndeg(I-K_L, V,0) = (-1)^Ns(M)$. 

\end{rem}

The conclusion for small $\tau$ is obtained now by a continuity argument. 
Indeed, fix $r>0$ and $P_L$
as before. The solutions of (\ref{linear}) with initial value $\phi\in B_r(0)$ are uniformly bounded; thus, by Gronwall's lemma we deduce that $\|P-P_0\| = O(\tau)$, where the operator $P_0$ is defined by $P_0(\phi)(t)\equiv v(T)$, with $v$ the unique solution of the system $v'(t)=(A+B)v(t)$ satisfying $v(0)=\phi(0)$. 
Moreover, recall that if $\tau$ is small then 
$P_L$ is homotopic 
to $P_0$; thus, the result follows from Lemma \ref{degP}.

\section{Example: a system of DDEs with singularities}
\label{exam}

A simple example is presented here in order to illustrate our main results. 
Let $0\le J_0\le J \ne 0$ and
$$
g(x,y):= -dx + |y|^2\left( 
\sum_{j=1}^{J_0} a_j\frac{x-v_j}{|x-v_j|^{\alpha_j}} +
\sum_{j=J_0+1}^{J} a_j\frac{y-v_j}{|y-v_j|^{\alpha_j}}
\right)$$
where $d,a_j>0$, $\alpha_j>2$
and $v_j\in \R^N\backslash\{0\}$ are pairwise different vectors. 
A simple computation shows that
$$\langle g(x,x),x\rangle < 0 
\qquad |x|\gg 0$$ 
and
$$\langle g(x,x),v_j-x\rangle < 0 \qquad |x-v_j|\ll 1 
$$
for $j=1,\ldots, J$. Moreover, $g(0,0)=0$ and
$$A=D_xg(0,0)=-dI, \quad B=D_yg(0,0)=0. 
$$
Thus, taking $\Omega:=B_R(0)\backslash \cup_{j=1}^J B_\eta(v_j)$ where 
$R\gg 0$ and $\eta\ll 1$, 
Corollary \ref{smalldelay} applies. Since 
$\chi(\Omega)= 1-J < 1 = (-1)^Ns(A+B)$,
we conclude that the number of 
$T$-periodic solutions of (\ref{nonaut})
for small $\tau$ and $\|p\|_\infty$ is generically $J+1$.

\section*{Acknowledgements}

The first two authors were partially supported by projects CONICET PIP 11220130100006CO 
and UBACyT 20020160100002BA.

The first author wants to thank Prof. J. Barmak for his thoughtful comments regarding the fixed point property  and the Euler characteristic.

\bigskip

\noindent Pablo Amster and Mariel Paula Kuna 

\medskip
\noindent \textit{E-mails}: pamster@dm.uba.ar --
mpkuna@dm.uba.ar.

\medskip

\noindent Departamento de Matem\'atica, Facultad de Ciencias Exactas y Naturales, Universidad de Buenos Aires, Ciudad Universitaria, Pabell\'on I, 1428 Buenos Aires, Argentina and IMAS-CONICET.

\bigskip
\noindent Gonzalo Robledo 

\medskip
\noindent \textit{E-mail}: grobledo@uchile.cl.

\bigskip

\noindent Departamento de Matem\'aticas, Facultad de Ciencias,  Universidad de Chile,  Casilla 653 Santiago, Chile.

\end{document}